\documentclass{siamart1116}
\usepackage{amsmath}
\usepackage{amssymb}
\usepackage{enumerate}
\usepackage{graphicx}
\usepackage{epstopdf}
\usepackage{enumitem}

\usepackage{pgfplots, pgfplotstable, booktabs, colortbl, array}

\bibliographystyle{siamplain}

\overfullrule=0mm

\DeclareMathOperator*{\argmin}{arg\,min}

\usepackage{mathtools}
\DeclarePairedDelimiter{\ceil}{\lceil}{\rceil}
\DeclarePairedDelimiter{\floor}{\lfloor}{\rfloor}

\begin{document}

\title{High-Order Retractions on Matrix Manifolds using Projected Polynomials}

\author{
  Evan S. Gawlik\thanks{Department of Mathematics, University of California, San Diego
    (\email{egawlik@ucsd.edu}, \email{mleok@math.ucsd.edu}).}
  \and
  Melvin Leok\footnotemark[1]
}

\date{}

\headers{High-Order Retractions on Matrix Manifolds}{E. S. Gawlik and M. Leok}

\maketitle

\begin{abstract}
We derive a family of high-order, structure-preserving approximations of the Riemannian exponential map on several matrix manifolds, including the group of unitary matrices, the Grassmannian manifold, and the Stiefel manifold. Our derivation is inspired by the observation that if $\Omega$ is a skew-Hermitian matrix and $t$ is a sufficiently small scalar, then there exists a polynomial of degree $n$ in $t\Omega$ (namely, a Bessel polynomial) whose polar decomposition delivers an approximation of $e^{t\Omega}$ with error $O(t^{2n+1})$.  We prove this fact and then leverage it to derive high-order approximations of the Riemannian exponential map on the Grassmannian and Stiefel manifolds.  Along the way, we derive related results concerning the supercloseness of the geometric and arithmetic means of unitary matrices.
\end{abstract}


\section{Introduction}

Approximating the Riemannian or Lie-theoretic exponential map on a matrix manifold is a task of importance in a variety of applications, including numerical integration on Lie groups~\cite{hairer2006geometric,iserles2000lie,hall2014lie,bou2009hamilton}, optimization on manifolds~\cite{absil2009optimization,edelman1998geometry,adler2002newton,lundstrom2002adaptive}, interpolation of manifold-valued data~\cite{sander2012geodesic,sander2015geodesic,grohs2013quasi,gawlik2016interpolation}, rigid body simulation~\cite{bou2009hamilton,kobilarov2009lie}, fluid simulation~\cite{gawlik2011geometric}, and computer vision~\cite{turaga2011statistical,lui2012advances,gallivan2003efficient}.  Often, special attention is paid to preserving the structure of the exponential map~\cite{celledoni2000approximating}, which, for instance, should return a unitary matrix when the input $\Omega$ is skew-Hermitian.  
In this paper, we construct structure-preserving approximations to the Riemannian exponential map on matrix manifolds using \emph{projected polynomials}
 -- polynomial functions of matrices which, when projected onto a suitable set, deliver approximations to the Riemannian exponential with a desired order of accuracy.  These projected polynomials can be thought of as high-order generalizations of the ``projection-like retractions'' considered in~\cite{absil2012projection}.
The matrix manifolds we consider are:
\begin{enumerate}
\item The group of unitary $m \times m$ matrices.
\item The Grassmannian manifold $Gr(p,m)$, which consists of all $p$-dimensional linear subspaces of $\mathbb{C}^m$, where $m \ge p$.
\item The Stiefel manifold $St(p,m) = \{Y \in \mathbb{C}^{m \times p} \mid Y^*Y=I\}$, where $m \ge p$.
\end{enumerate}

The projector we use is to accomplish this task is the map which sends a full-rank matrix $A \in \mathbb{C}^{m \times p}$ ($m \ge p$) to the nearest matrix with orthonormal columns.  The latter matrix is precisely the factor $U$ in the polar decomposition $A=UH$, where $U \in \mathbb{C}^{m \times p}$ has orthonormal columns and $H \in \mathbb{C}^{p \times p}$ is Hermitian positive-definite~\cite[Theorem 1]{fan1955some}.  
In the case of the Grassmannian manifold, the $QR$ decomposition can be used in place of the polar decomposition, leading to methods with very low computational cost.

Interestingly, in the case of the unitary group and the Grassmannian manifold, superconvergent approximations of the exponential are constructible with this approach.  By this we mean that it is possible to construct polynomials of degree $n$ that, upon projection, deliver approximations to the exponential with error of order $n+2$ or higher.  The appropriate choices of polynomials turn out to be intimately related to the Bessel polynomials, a well-known orthogonal sequence of polynomials~\cite{krall1949new}, and the resulting approximations have error of order $2n+1$; see Theorems~\ref{thm:polarexp} and~\ref{thm:grassmanexp} and Corollary~\ref{thm:grassmanexpqr}.

One of the major advantages of this approach is that it delivers approximations to the exponential on the unitary group, the Grassmannian manifold, and the Steifel manifold that, to machine precision, have orthonormal columns.  This is of obvious importance for the unitary group and the Stiefel manifold, and it is even desirable on the Grassmannian manifold, where it is common in computations to represent elements of the Grassmannian -- $p$-dimensional subspaces of $\mathbb{C}^m$ -- as $m \times p$ matrices whose columns form orthonormal bases for those subspaces~\cite{edelman1998geometry}.

Furthermore, when the polar decomposition is adopted as the projector, projected polynomials have the advantage that they can be computed using only rudimentary operations on matrices: matrix addition, multiplication, and inversion.  This follows from the fact that the polar decomposition can be computed iteratively~\cite[Chapter 8]{higham2008functions}.  Rudimentary algorithms for calculating the exponential on the Grassmannian and Stiefel manifolds are particularly desirable, since the most competitive existing algorithms for accomplishing this task involve singular value and/or eigenvalue decompositions~\cite[Theorem 2.1, Corollary 2.2, and Theorem 2.3]{edelman1998geometry}, a feature that renders existing algorithms less ideally suited for parallel computation than projected polynomials.  

In spite of these advantages, it is worth noting that not all of the constructions in this paper lead to algorithms that outshine their competitors.  
Diagonal Pad\'{e} approximations of the exponential deliver, to machine precision, unitary approximations of $e^\Omega$ when $\Omega$ is skew-Hermitian~\cite[p. 97]{iserles2000lie}.  It is clear that the projected polynomials we present below (in Theorem~\ref{thm:polarexp}) for approximating $e^\Omega$ are more expensive to compute, at least when the comparison is restricted to approximations of $e^\Omega$ with equal order of accuracy.  On the Stiefel manifold, Pad\'{e} approximation is not an option, rendering projected polynomials more attractive.  However, they are not superconvergent on the Stiefel manifold; see Theorem~\ref{thm:stiefelexp}.  The setting in which projected polynomials appear to shine the brightest is the Grassmannian manifold $Gr(p,m)$, where they provide superconvergent, orthonormal approximations to the Riemannian exponential with algorithmic complexity $O(mp^2)$; see Theorem~\ref{thm:grassmanexp} and Corollary~\ref{thm:grassmanexpqr}.  To our knowledge, these are the first such approximations (other than the lowest-order versions) to appear in the literature on the Grassmannian manifold.

Structure-preserving approximations of the exponential map on matrix manifolds have a long history, particularly for matrix manifolds that form Lie groups.  
On Lie groups, techniques involving rational approximation~\cite[p. 97]{iserles2000lie},  splitting~\cite{celledoni2000approximating,yoshida1990construction}, canonical coordinates of the second kind~\cite{celledoni2001methods}, and the generalized polar decomposition~\cite{iserles2005efficient} have been studied, and many of these strategies lead to high-order approximations. 
For more general matrix manifolds like the Grassmannian and Stiefel manifolds, attention has been primarily restricted to methods for calculating the exponential exactly~\cite{edelman1998geometry,absil2004riemannian,absil2009optimization} or approximating it to low order~\cite{absil2009optimization,absil2012projection,kaneko2013empirical,fiori2015tangent}.  In this context, structure-preserving approximations of the exponential are commonly referred to as \emph{retractions}~\cite[Definition 4.1.1]{absil2009optimization}. High-order retractions on the Grassmannian and Stiefel manifolds have received very little attention, but there are good reasons to pursue them.  For instance, in optimization, exactly evaluating the Riemannian Hessian of a function defined on a matrix manifold requires the use of a retraction with second-order accuracy or higher, at least if one is interested in its value away from critical points~\cite[p. 107]{absil2009optimization}.  In addition, existing algorithms for calculating the exponential on the Grassmannian and Stiefel manifolds exactly~\cite[Theorem 2.1, Corollary 2.2, and Theorem 2.3]{edelman1998geometry} are relatively expensive, which raises the question of whether more efficient options, perhaps with nonzero but controllable error, are available. 
On the Grassmannian manifold, the answer seems to be yes, at least for small-normed input; see Corollary~\ref{thm:grassmanexpqr}. 

\paragraph{Organization} 
This paper is organized as follows. In Section~\ref{sec:results}, we give statements of our results, deferring their proofs to Section~\ref{sec:proofs}.  
Our main results are Theorems~\ref{thm:polarexp},~\ref{thm:grassmanexp}, and~\ref{thm:stiefelexp}, which detail families of approximants to the exponential on the unitary group, the Grassmannian manifold, and the Steifel manifold, respectively.  A fourth noteworthy result is Corollary~\ref{thm:grassmanexpqr}, which provides a computationally inexpensive variant of the approximants in Theorem~\ref{thm:grassmanexp}.
We also detail two related results, Proposition~\ref{thm:polargeodesic} and Theorem~\ref{thm:geoarith}, that concern the supercloseness of the geometric and arithmetic means of unitary matrices.  In Section~\ref{sec:proofs}, we prove each of the results just mentioned.  In Section~\ref{sec:numerical}, we describe algorithms for calculating our proposed approximations, with an emphasis on iterative methods for computing the polar decomposition.  We conclude that section with numerical examples.

\section{Statement of Results} \label{sec:results}

In this section, we give statements of our results.  Proofs are detailed in Section~\ref{sec:proofs}.

\subsection{Exponentiation on the Unitary Group} \label{sec:unitaryexp}

Our first result deals with the approximation of the exponential of a skew-Hermitian matrix $\Omega \in \mathbb{C}^{m \times m}$ with projected polynomials.  To motivate the forthcoming theorem, consider the Taylor polynomial $q_n(t\Omega)$ of degree $n$ for $e^{t\Omega}$:
\[
q_n(t\Omega) = \sum_{k=0}^n \frac{(t\Omega)^k}{k!}.
\]
This quantity, in general, is not an unitary matrix, even though the matrix it aims to approximate, $e^{t\Omega}$, is unitary.  If $t$ is sufficiently small (small enough so that $q_n(t\Omega)$ is nonsingular), then $q_n(t\Omega)$ can be made unitary by computing the polar decomposition $q_n(t\Omega) = UH$, where $U \in \mathbb{C}^{m \times m}$ is unitary and $H \in \mathbb{C}^{m \times m}$ is Hermitian positive-definite.  The matrix $U$ is easily seen to be an unitary approximation to $e^{t\Omega}$ with error at worst $O(t^{n+1})$, owing to the fact that
\[
\|U-q_n(t\Omega)\| \le  \|V-q_n(t\Omega)\|
\]
for every unitary matrix $V \in \mathbb{C}^{m \times m}$, 
where $\|\cdot\|$ denotes the Frobenius norm~\cite[Theorem 1]{fan1955some}.  
Indeed,
\begin{align*}
\|U-e^{t\Omega}\| 
&\le \|U-q_n(t\Omega)\| + \|q_n(t\Omega)-e^{t\Omega}\| \\
&\le \|e^{t\Omega}-q_n(t\Omega)\| + \|q_n(t\Omega)-e^{t\Omega}\| \\
&= O(t^{n+1}).
\end{align*}

Below we address the question of whether a better approximation to $e^{t\Omega}$ can be constructed by computing the unitary factor in the polar decomposition of
\[
q_n(t\Omega) = \sum_{k=0}^n a_k t^k\Omega^k
\]
for suitably chosen coefficients $a_k$.  We show that if the coefficients $a_k$ are chosen carefully, then an approximation with error of order $t^{2n+1}$ can be constructed.  The choice of coefficients $a_k$ corresponds to the selection of a Bessel polynomial of degree $n$ in $t\Omega$.   In what follows, we use $\mathcal{P}$ to denote the map which sends a full-rank matrix $A \in \mathbb{C}^{m \times p}$ ($m \ge p$) to the factor $\mathcal{P}(A)=U$ in its polar decomposition $A=UH$, where $U \in \mathbb{C}^{m \times p}$ has orthonormal columns and $H \in \mathbb{C}^{p \times p}$ is Hermitian positive-definite~\cite[Theorem 8.1]{higham2008functions}.

\begin{theorem} \label{thm:polarexp}
Let $\Omega \in \mathbb{C}^{m \times m}$ be skew-Hermitian, and let $n \ge 0$ be an integer.  Define
\begin{equation} \label{bessel}
\Theta_n(z) = \sum_{k=0}^n \binom{n}{k} \frac{(2n-k)!}{(2n)!}(2z)^k.
\end{equation}
Then
\[
\mathcal{P}(\Theta_n(t\Omega)) = e^{t\Omega} + O(t^{2n+1}).
\]
\end{theorem}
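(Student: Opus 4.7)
The plan is to exploit the well-known connection between the polynomial $\Theta_n$ and the diagonal Padé approximants of the exponential. Comparing formula \eqref{bessel} with the classical expression for the $[n/n]$ Padé numerator of $e^x$, namely $N_n(x) = \sum_{k=0}^n \binom{n}{k}\frac{(2n-k)!}{(2n)!} x^k$, one sees that $\Theta_n(z) = N_n(2z)$. Thus $\Theta_n(z)/\Theta_n(-z)$ is the $[n/n]$ Padé approximant of $e^{2z}$, and in particular
\[
\Theta_n(z) - e^{2z}\,\Theta_n(-z) = O(z^{2n+1})
\]
as scalar functions of $z$. Since $\Theta_n$ is a polynomial, the identical relation holds with $z$ replaced by the matrix $t\Omega$.

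Next, I would exploit the structure of $A := \Theta_n(t\Omega)$ when $\Omega$ is skew-Hermitian and $t$ is a small real scalar. Because $\Theta_n$ has real coefficients and $\Omega^* = -\Omega$, one has $A^* = \Theta_n(-t\Omega)$, and $A, A^*$ commute (both being polynomials in $\Omega$). For $t$ sufficiently small, $A = I + O(t)$ is invertible, so the polar factor $P := \mathcal{P}(A) = A(A^*A)^{-1/2}$ is well-defined. Using the commutativity, I compute
\[
P^2 = A^2 (A^*A)^{-1} = A(A^*)^{-1} = \Theta_n(t\Omega)\,\Theta_n(-t\Omega)^{-1},
\]
which, by the Padé identity above, equals $e^{2t\Omega} + O(t^{2n+1})$.

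It then remains to pass from the squared identity to the identity for $P$ itself. Setting $U := e^{t\Omega}$, both $P$ and $U$ are functions of $\Omega$ and hence commute, so $P^2 - U^2 = (P - U)(P + U)$. Since $P + U = 2I + O(t)$ is invertible for $t$ small, I obtain
\[
P - U = (P^2 - U^2)(P+U)^{-1} = O(t^{2n+1}),
\]
which is the desired conclusion.

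The main obstacle I anticipate is cleanly establishing the identification of $\Theta_n(z)/\Theta_n(-z)$ with the $[n/n]$ Padé approximant of $e^{2z}$, since the Bessel polynomial literature employs several inequivalent normalizations. If a direct citation to a Padé table is not convenient, a self-contained alternative is to verify by induction on $n$, or by an integral representation of the Padé remainder, that $\Theta_n(z) - e^{2z}\Theta_n(-z)$ vanishes to order $2n+1$ at $z = 0$; everything downstream of that identity is then straightforward linear algebra.
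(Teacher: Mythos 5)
Your argument is correct and reproduces, essentially step for step, the paper's own alternative proof in Section~\ref{sec:pade}: compute $\mathcal{P}(\Theta_n(t\Omega))^2 = \Theta_n(t\Omega)\Theta_n(-t\Omega)^{-1}$ via $\mathcal{P}(A)=A(A^*A)^{-1/2}$ and the commutativity of functions of $t\Omega$, recognize the right-hand side as the diagonal Pad\'e approximant of $e^{2t\Omega}$, and factor $P^2-e^{2t\Omega}=(P-e^{t\Omega})(P+e^{t\Omega})$. For comparison, the paper's primary proof in Section~\ref{sec:polarexpproof} instead uses the polar-decomposition perturbation bound of Lemma~\ref{lemma:symmetrysimple} to reduce the claim to showing that $\mathrm{skew}\left(e^{-t\Omega}q_n(t\Omega)\right)$ vanishes to order $t^{2n+1}$ and then solves a linear system for the coefficients $a_k$, an approach the authors retain because it avoids Pad\'e theory and results on commuting matrix functions.
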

In fact, we will show that the polar decomposition of $\Theta_n(t\Omega)$ delivers the highest order approximation of $e^{t\Omega}$ among all polynomials in $t\Omega$ of degree $n$, up to rescaling.  That is, if $r_n(t\Omega)$ is any other polynomial in $t\Omega$ of degree $n$ satisfying $r_n(0)=I$, then
\[
\mathcal{P}(r_n(t\Omega)) = e^{t\Omega} + O(t^k)
\]
for some $0 \le k \le 2n$.

The polynomials~(\ref{bessel}) are scaled versions of Bessel polynomials~\cite{krall1949new}.  More precisely, we have
\[
\Theta_n(z) = \frac{2^n n!}{(2n)!} \theta_n(z),
\]
where
\[
\theta_n(z) = \sum_{k=0}^n \frac{(n+k)!}{(n-k)!k!} \frac{z^{n-k}}{2^k}
\]
denotes the \emph{reverse Bessel polynomial} of degree $n$.  The first few polynomials $\Theta_n(z)$ are given by
\begin{align*}
\Theta_0(z) &= 1, \\
\Theta_1(z) &= 1+z, \\
\Theta_2(z) &= 1+z + \frac{1}{3}z^2, \\
\Theta_3(z) &= 1+z + \frac{2}{5}z^2 + \frac{1}{15} z^3, \\
\Theta_4(z) &= 1+z + \frac{3}{7}z^2 + \frac{2}{21} z^3 + \frac{1}{105} z^4.
\end{align*}
Note that, rather surprisingly, $\Theta_n(z)$ agrees with $e^z$ only to first order for every $n \ge 1$.

\subsection{Exponentiation on the Grassmannian}

We now consider the task of approximating the Riemannian exponential map on the Grassmannian manifold $Gr(p,m)$, which consists of all $p$-dimensional subspaces of $\mathbb{C}^m$, where $m \ge p$.  
We begin by reviewing the geometry of $Gr(p,m)$, with an emphasis on computational aspects.

In computations, it is convenient to represent each subspace $\mathcal{V} \in Gr(p,m)$ with a matrix $Y \in \mathbb{C}^{m \times p}$ having orthonormal columns that span $\mathcal{V}$.  The choice of $Y$ is not unique, so we are of course thinking of $Y$ as a representative of an equivalence class of $m \times p$ matrices sharing the same column space.  With this identification, the tangent space to $Gr(p,m)$ at $Y$ is given by
\[
T_Y Gr(p,m) = \{ Y_\perp K \mid K \in \mathbb{C}^{(m-p) \times p} \},
\]
where $Y_\perp \in \mathbb{C}^{m \times (m-p)}$ is any matrix such that $\begin{pmatrix} Y & Y_\perp \end{pmatrix}$ is unitary, and $Y_\perp K$ is regarded as a representative of an equivalence class of matrices sharing the same column space~\cite[p. 15]{edelman1998geometry}.
With respect to the canonical metric on $Gr(p,m)$, the Riemannian exponential $\mathrm{Exp}^{Gr}_Y : T_Y Gr(p,m) \rightarrow Gr(p,m)$ at $Y \in \mathbb{C}^{m \times p}$ in the direction $H = Y_\perp K \in \mathbb{C}^{m \times p}$ is given by~\cite[p. 10]{edelman1998geometry}
\begin{equation} \label{grassmanexp}
\mathrm{Exp}^{Gr}_Y H = \begin{pmatrix} Y & Y_\perp \end{pmatrix} \exp \begin{pmatrix} 0 & -K^* \\ K & 0 \end{pmatrix} \begin{pmatrix} I \\ 0 \end{pmatrix}.
\end{equation}

The goal of this subsection is to construct computationally inexpensive approximations of $\text{Exp}^{Gr}_Y H$.
In order to be competitive with existing methods, such approximations must have computational complexity $O(mp^2)$ or better, owing to the following well-known result~\cite[Theorem 2.3]{edelman1998geometry}.
\begin{theorem}{\cite[Theorem 2.3]{edelman1998geometry}}
Let $H = U \Sigma V^*$ be the thin singular value decomposition of $H$, i.e. $U \in \mathbb{C}^{m \times p}$ has orthonormal columns, $\Sigma \in \mathbb{C}^{p \times p}$ is diagonal with nonnegative entries, and $V \in \mathbb{C}^{p \times p}$ is unitary.  Then
\[
\mathrm{Exp}^{Gr}_Y H = Y V \cos(\Sigma) V^* + U\sin(\Sigma) V^*.
\]
\end{theorem}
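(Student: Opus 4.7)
The plan is to compute the matrix exponential appearing in the definition (\ref{grassmanexp}) of $\mathrm{Exp}^{Gr}_Y H$ by using the SVD of $K$ to reveal a $\cos/\sin$ block structure, and then relate the SVD of $K$ to the thin SVD of $H = Y_\perp K$.

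First I would introduce the thin SVD $K = \tilde{U}\Sigma V^{*}$, where $\tilde{U}\in\mathbb{C}^{(m-p)\times p}$ has orthonormal columns, and observe that $H = Y_\perp K = (Y_\perp \tilde{U})\Sigma V^{*}$ is a thin SVD of $H$, so we may take $U = Y_\perp \tilde{U}$. This step merely identifies the SVD quantities in the statement with those of $K$.

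Next I would compute $\exp(M)$ for the block matrix
\[
M = \begin{pmatrix} 0 & -K^{*} \\ K & 0 \end{pmatrix}
\]
by splitting the power series into even and odd parts. A direct calculation gives
\[
M^{2} = \begin{pmatrix} -K^{*}K & 0 \\ 0 & -KK^{*} \end{pmatrix},
\qquad
M^{2k+1} = \begin{pmatrix} 0 & -(-K^{*}K)^{k}K^{*} \\ K(-K^{*}K)^{k} & 0 \end{pmatrix}.
\]
Summing the even terms and using $K^{*}K = V\Sigma^{2}V^{*}$ yields $\cos(\sqrt{K^{*}K}) = V\cos(\Sigma)V^{*}$ in the top-left block, and analogously $\cos(\sqrt{KK^{*}})$ in the bottom-right. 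For the odd terms, the identity $K(-K^{*}K)^{k} = \tilde{U}(-1)^{k}\Sigma^{2k+1}V^{*}$ (immediate from the SVD) collapses the series into $\tilde{U}\sin(\Sigma)V^{*}$ in the bottom-left block.

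Finally, I would right-multiply by $\begin{pmatrix} I \\ 0 \end{pmatrix}$ to extract the first block column, obtaining
\[
\exp(M)\begin{pmatrix} I \\ 0 \end{pmatrix} = \begin{pmatrix} V\cos(\Sigma)V^{*} \\ \tilde{U}\sin(\Sigma)V^{*} \end{pmatrix},
\]
and then left-multiply by $\begin{pmatrix} Y & Y_\perp \end{pmatrix}$ to obtain $YV\cos(\Sigma)V^{*} + Y_\perp\tilde{U}\sin(\Sigma)V^{*} = YV\cos(\Sigma)V^{*} + U\sin(\Sigma)V^{*}$, as claimed. There is no serious obstacle here; the only subtlety is keeping careful track of the noncommutativity when manipulating $K(-K^{*}K)^{k}$ versus $(-KK^{*})^{k}K$, which the SVD resolves cleanly.
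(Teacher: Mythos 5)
The paper itself offers no proof of this statement; it is quoted verbatim as Theorem 2.3 of Edelman, Arias, and Smith~\cite{edelman1998geometry}, so there is no ``paper's proof'' to compare against. Your argument is the standard one and is essentially correct: splitting the exponential series into even and odd powers of $M = \begin{pmatrix} 0 & -K^* \\ K & 0 \end{pmatrix}$, noting that $M^{2k}$ is block-diagonal with blocks $(-K^*K)^k$ and $(-KK^*)^k$, and that $M^{2k+1}$ has off-diagonal blocks $-(-K^*K)^k K^*$ and $K(-K^*K)^k = (-KK^*)^k K$, and then resumming into $\cos$ and $\sin$ series is exactly the right computation, and the bookkeeping $(KK^*)^k K = K(K^*K)^k$ that you flag is the only place where noncommutativity needs a moment's care.

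One detail worth tightening. You base everything on a thin SVD $K = \tilde U\Sigma V^*$ with $\tilde U \in \mathbb{C}^{(m-p)\times p}$ having orthonormal columns. That decomposition exists only when $m-p \ge p$; the paper assumes merely $m\ge p$, so the case $m-p < p$ is not covered by this step as written. The fix is cheap and does not change the structure of the proof: leave $K$ alone, observe directly that
\[
\exp(M)\begin{pmatrix} I \\ 0 \end{pmatrix} = \begin{pmatrix} \cos\bigl(\sqrt{K^*K}\bigr) \\ K\,f(K^*K) \end{pmatrix},
\qquad f(z) = \sum_{k\ge 0} \frac{(-z)^k}{(2k+1)!},
\]
and then use $K^*K = H^*H = V\Sigma^2 V^*$ together with $K = Y_\perp^* H = Y_\perp^* U\Sigma V^*$ and $Y_\perp Y_\perp^* U\Sigma = U\Sigma$ (because columns of $U$ paired with a nonzero singular value lie in the column space of $H\subseteq\operatorname{range}(Y_\perp)$, and the remaining ones are annihilated by $\Sigma$). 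This recovers $\cos\bigl(\sqrt{K^*K}\bigr) = V\cos(\Sigma)V^*$ and $Y_\perp K\,f(K^*K) = U\sin(\Sigma)V^*$ without ever needing $\tilde U$, and is valid for all $m\ge p$. Other than that, the proof is sound.
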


The preceding theorem reveals that $\mathrm{Exp}^{Gr}_Y H$ can be computed exactly with $O(mp^2)$ operations, since this is the cost of computing the thin singular value decomposition of $H \in \mathbb{C}^{m \times p}$.  With this in mind, we aim to derive approximations of $\mathrm{Exp}^{Gr}_Y H$ with smaller or comparable computational complexity.

Since the matrix $Z := \begin{pmatrix} 0 & -K^* \\ K & 0 \end{pmatrix}$ appearing in~(\ref{grassmanexp}) is skew-Hermitian, an obvious option is to approximate $\exp Z$ in~(\ref{grassmanexp}) with a projected polynomial $\mathcal{P}(\Theta_n(Z))$ in accordance with Section~\ref{sec:unitaryexp}.  This leads to approximants of the form
\begin{equation} \label{grassmanexpnaive}
\mathrm{Exp}^{Gr}_Y (tH) = \begin{pmatrix} Y & Y_\perp \end{pmatrix} \mathcal{P}(\Theta_n(tZ)) \begin{pmatrix} I \\ 0 \end{pmatrix} + O(t^{2n+1}),
\end{equation}
which, unfortunately, have computational complexity $O(m^3)$.  Remarkably, we show in Lemma~\ref{lemma:polarsquare2rect} below that
\begin{equation} \label{PTheta}
\mathcal{P}(\Theta_n(tZ)) \begin{pmatrix} I \\ 0 \end{pmatrix} = \mathcal{P}\left( \Theta_n(tZ) \begin{pmatrix} I \\ 0 \end{pmatrix} \right)
\end{equation}
if $t$ is sufficiently small (small enough so that $\Theta_n(tZ)$ is nonsingular).  This is significant, since the right-hand side of this equality involves the polar decomposition of an $m \times p$ matrix $\Theta_n(tZ) \begin{pmatrix} I \\ 0 \end{pmatrix}$, which can be computed in $O(mp^2)$ operations.  A few more algebraic manipulations (detailed in Section~\ref{sec:grassmanexpproof}) lead to the following scheme for approximating the exponential on the Grassmannian in $O(mp^2)$ operations.
\begin{theorem} \label{thm:grassmanexp}
Let $Y \in \mathbb{C}^{m \times p}$ have orthonormal columns, and let $H \in T_Y Gr(p,m)$.  Then, for any $n \ge 0$,
\[
\mathrm{Exp}^{Gr}_Y (tH) = \mathcal{P}\left( Y\alpha_n(t^2 H^*H) + tH\beta_n(t^2 H^*H) \right) + O\left( t^{2n+1} \right),
\]
where
\begin{align*}
\alpha_n(z) &= \sum_{j=0}^{\floor{n/2}} a_{2j} (-z)^j, \\
\beta_n(z) &= \sum_{j=0}^{\floor{(n-1)/2}} a_{2j+1} (-z)^j,
\end{align*}
and
\[
a_k = \binom{n}{k}\frac{(2n-k)!}{(2n)!} 2^k, \quad k=0,1,\dots,n.
\]

\end{theorem}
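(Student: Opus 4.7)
The plan is to start from the exact formula~\eqref{grassmanexp}, substitute the approximation of Theorem~\ref{thm:polarexp} for the matrix exponential appearing there, and then push the polar projector $\mathcal{P}$ outward so that the desired right-hand side emerges.  Writing $H = Y_\perp K$ and $Z = \begin{pmatrix} 0 & -K^* \\ K & 0 \end{pmatrix}$, Theorem~\ref{thm:polarexp} gives $e^{tZ} = \mathcal{P}(\Theta_n(tZ)) + O(t^{2n+1})$, so~\eqref{grassmanexp} becomes
\[
\mathrm{Exp}^{Gr}_Y(tH) = \begin{pmatrix} Y & Y_\perp \end{pmatrix} \mathcal{P}(\Theta_n(tZ)) \begin{pmatrix} I \\ 0 \end{pmatrix} + O(t^{2n+1}).
\]
I would then invoke Lemma~\ref{lemma:polarsquare2rect} (the identity~\eqref{PTheta}) to move the column selector inside the projector, and use the elementary identity $\mathcal{P}(QA) = Q\mathcal{P}(A)$ valid for unitary $Q$ (a direct consequence of uniqueness of the polar decomposition, since $QA = (QU)H$ still has $QU$ with orthonormal columns) to bring $\begin{pmatrix} Y & Y_\perp \end{pmatrix}$ inside as well, yielding
\[
\mathrm{Exp}^{Gr}_Y(tH) = \mathcal{P}\!\left( \begin{pmatrix} Y & Y_\perp \end{pmatrix} \Theta_n(tZ) \begin{pmatrix} I \\ 0 \end{pmatrix} \right) + O(t^{2n+1}).
\]

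The remaining step is a direct computation of the argument of $\mathcal{P}$.  A short induction on $k$, exploiting the off-diagonal block structure of $Z$, gives
\[
Z^{2j}\begin{pmatrix} I \\ 0 \end{pmatrix} = \begin{pmatrix} (-K^*K)^j \\ 0 \end{pmatrix}, \qquad Z^{2j+1}\begin{pmatrix} I \\ 0 \end{pmatrix} = \begin{pmatrix} 0 \\ K(-K^*K)^j \end{pmatrix}.
\]
Collecting the even and odd powers in $\Theta_n(tZ) = \sum_{k=0}^n a_k t^k Z^k$ according to the definitions of $\alpha_n$ and $\beta_n$ produces
\[
\Theta_n(tZ)\begin{pmatrix} I \\ 0 \end{pmatrix} = \begin{pmatrix} \alpha_n(t^2 K^*K) \\ tK\beta_n(t^2 K^*K) \end{pmatrix}.
\]
Left-multiplying by $\begin{pmatrix} Y & Y_\perp \end{pmatrix}$ and using $Y_\perp K = H$ together with $K^*K = H^*H$ (valid because $Y_\perp$ has orthonormal columns) rewrites the argument of $\mathcal{P}$ as $Y\alpha_n(t^2 H^*H) + tH\beta_n(t^2 H^*H)$, completing the proof.

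The main obstacle is not any single algebraic step but rather ensuring that the manipulations compose without degrading the $O(t^{2n+1})$ remainder.  Left-multiplication by the unitary matrix $\begin{pmatrix} Y & Y_\perp \end{pmatrix}$ preserves the Frobenius norm, right-multiplication by $\begin{pmatrix} I \\ 0 \end{pmatrix}$ can only decrease it, and the application of~\eqref{PTheta} is an exact algebraic identity valid for all $t$ small enough that $\Theta_n(tZ)$ is nonsingular, which holds in a neighborhood of $t=0$ because $\Theta_n(0) = I$.  Once these continuity/nondegeneracy caveats are noted, the chain of equalities produces the stated error bound.
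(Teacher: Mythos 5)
Your proof is correct and follows essentially the same route as the paper: substitute the Theorem~\ref{thm:polarexp} approximation of $e^{tZ}$ into~\eqref{grassmanexp}, apply the identity~\eqref{PTheta} (Lemma~\ref{lemma:polarsquare2rect}) and the unitary equivariance~\eqref{polarinvariance} to push $\mathcal{P}$ outward, then evaluate $\Theta_n(tZ)\begin{pmatrix} I \\ 0 \end{pmatrix}$ via the block structure of $Z$ (which the paper isolates as Lemma~\ref{lemma:evenodd}). Your remarks on why the $O(t^{2n+1})$ remainder survives each manipulation are a welcome clarification that the paper leaves implicit.
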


The first few nontrivial approximants provided by Theorem~\ref{thm:grassmanexp} read
\begin{align}
\mathrm{Exp}^{Gr}_Y (tH) &= \mathcal{P}\left( Y + tH \right) + O(t^3), \label{grassmanexp1}  \\
\mathrm{Exp}^{Gr}_Y (tH) &= \mathcal{P}\left( Y\left(I-\frac{1}{3}t^2 H^*H\right) + tH \right) + O(t^5), \label{grassmanexp2} \\
\mathrm{Exp}^{Gr}_Y (tH) &= \mathcal{P}\left( Y\left(I-\frac{2}{5}t^2 H^*H\right) + tH\left(I-\frac{1}{15}t^2 H^*H\right) \right) + O(t^7), \label{grassmanexp3} \\
\begin{split}
\mathrm{Exp}^{Gr}_Y (tH) &= \mathcal{P}\left( Y\left(I-\frac{3}{7}t^2 H^*H+\frac{1}{105}t^4 (H^*H)^2\right) \right.\\&\left.\hspace{3em}+ tH\left(I-\frac{2}{21}t^2 H^*H\right) \right) + O(t^9). \label{grassmanexp4} 
\end{split}
\end{align}
Note that, rather interestingly, the commonly used retraction $\mathcal{P}(Y+tH)$ (see, for instance,~\cite{absil2004riemannian}) is in fact an approximation of $\mathrm{Exp}^{Gr}_Y(tH)$ with error $O(t^3) $, despite its appearance.

Note also that the approximants provided by Theorem~\ref{thm:grassmanexp} are rotationally equivariant.  That is, if $V \in \mathbb{C}^{m \times m}$ is a unitary matrix, $\widetilde{Y} = VY$, and $\widetilde{H}=VH$, then $\widetilde{H}^*\widetilde{H}=H^*V^*VH=H^*H$ and hence
\begin{align}
\mathcal{P}\left( \widetilde{Y}\alpha_n(t^2 \widetilde{H}^*\widetilde{H}) + t\widetilde{H}\beta_n(t^2 \widetilde{H}^*\widetilde{H}) \right) &= \mathcal{P}\left( V\left(Y\alpha_n(t^2 H^*H) + tH\beta_n(t^2 H^*H)\right) \right) \nonumber \\
&= V \mathcal{P}\left( Y\alpha_n(t^2 H^*H) + tH\beta_n(t^2 H^*H) \right), \label{grassmaninvariant}
\end{align}
where the last line follows from the fact that 
\begin{equation} \label{polarinvariance}
\mathcal{P}(VA) = V\mathcal{P}(A)
\end{equation}
for every full-rank $A \in \mathbb{C}^{m \times p}$ ($m \ge p$) and every unitary $V \in \mathbb{C}^{m \times m}$.

\paragraph{Replacing the Polar Decomposition with the QR Decomposition}

An extraordinary feature of Theorem~\ref{thm:grassmanexp} is that it applies, with slight modification, even if the map $\mathcal{P}$ is replaced by the map $\mathcal{Q}$ which sends a full-rank matrix $A \in \mathbb{C}^{m \times p}$ ($m \ge p$) to the factor $Q$ in the QR decomposition $A=QR$, where $Q \in \mathbb{C}^{m \times p}$ has orthonormal columns and $R \in \mathbb{C}^{p \times p}$ is upper triangular.  (The map $\mathcal{Q}$ is denoted $\mathrm{qf}$ in~\cite{absil2009optimization}.)  This follows from the fact that the columns of $A$, $\mathcal{P}(A)$, and $\mathcal{Q}(A)$ span the same space, so $\mathcal{P}(A)$ and $\mathcal{Q}(A)$ represent the same element of the Grassmannian manifold.

The only modification of Theorem~\ref{thm:grassmanexp} needed to make this idea precise is to use a genuine distance on the Grassmannian, such as
\[
\mathrm{dist}^{Gr}(X,Y) = \min_{\substack{V,W \in \mathbb{C}^{p \times p}\\V^*V=W^*W=I}} \|XV - YW\|,
\]
to measure the distance between subspaces~\cite[p. 30]{edelman1998geometry}.
We then have the following corollary.

\begin{corollary} \label{thm:grassmanexpqr}
Let $Y \in \mathbb{C}^{m \times p}$ have orthonormal columns, and let $H \in T_Y Gr(p,m)$.  Then, for any $n \ge 0$,
\[
\mathrm{dist}^{Gr} (\mathcal{Q}\left( Y\alpha_n(t^2 H^*H) + tH\beta_n(t^2 H^*H) \right), \, \mathrm{Exp}_Y(tH)) = O(t^{2n+1}),
\]
where $\alpha_n$ and $\beta_n$ are given in the statement of Theorem~\ref{thm:grassmanexp}.
\end{corollary}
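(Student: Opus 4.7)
The plan is to exploit the simple observation highlighted in the text: for any full-rank $A \in \mathbb{C}^{m\times p}$, the matrices $\mathcal{P}(A)$ and $\mathcal{Q}(A)$ have orthonormal columns spanning the same $p$-dimensional subspace as the columns of $A$. Set $A = A(t) := Y\alpha_n(t^2 H^*H) + tH\beta_n(t^2 H^*H)$. Since $\alpha_n(0)=1$ and $A(0)=Y$ has orthonormal columns, $A(t)$ is of full column rank for all sufficiently small $t$, so both $\mathcal{P}(A)$ and $\mathcal{Q}(A)$ are well-defined there.

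First I would record the relating unitary: because $\mathcal{P}(A)$ and $\mathcal{Q}(A)$ share the same column space and both have orthonormal columns, the $p\times p$ matrix $V := \mathcal{Q}(A)^* \mathcal{P}(A)$ satisfies $V^*V = I$ and $\mathcal{Q}(A)\,V = \mathcal{P}(A)$. This is the standard fact that two orthonormal bases of the same subspace differ by a unitary change of basis.

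Next I would apply the definition of the Grassmannian distance together with Theorem~\ref{thm:grassmanexp}. Taking the pair $(V, I)$ in the minimization defining $\mathrm{dist}^{Gr}$ gives
\[
\mathrm{dist}^{Gr}\bigl(\mathcal{Q}(A),\,\mathrm{Exp}^{Gr}_Y(tH)\bigr) \;\le\; \bigl\|\mathcal{Q}(A)\,V - \mathrm{Exp}^{Gr}_Y(tH)\bigr\| \;=\; \bigl\|\mathcal{P}(A) - \mathrm{Exp}^{Gr}_Y(tH)\bigr\|.
\]
By Theorem~\ref{thm:grassmanexp}, the right-hand side is $O(t^{2n+1})$ in the Frobenius norm, which yields the claim.

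There really is no main obstacle here; the corollary is essentially a direct packaging of Theorem~\ref{thm:grassmanexp} together with the invariance of the subspace under the choice of orthonormal basis. The only subtlety worth flagging explicitly is the full-rank hypothesis needed to make $\mathcal{Q}(A)$ well-defined, which is automatic for small $t$ because $A(0)=Y$ has orthonormal columns and full column rank is preserved under small perturbations. Note also that the argument works verbatim if one wishes to replace $\mathcal{Q}$ with any other map sending a full-rank matrix to an orthonormal basis of its column space, since only the shared column space of $\mathcal{P}(A)$ and $\mathcal{Q}(A)$ is used.
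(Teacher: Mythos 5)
Your proof is correct and matches the paper's (implicit, sketched) argument: the paper notes that $\mathcal{P}(A)$ and $\mathcal{Q}(A)$ span the same column space and hence represent the same Grassmannian element, and you simply make this precise by producing the change-of-basis unitary $V$ with $\mathcal{Q}(A)V=\mathcal{P}(A)$, then use the definition of $\mathrm{dist}^{Gr}$ as a minimum over unitaries to reduce to the Frobenius-norm bound of Theorem~\ref{thm:grassmanexp}. The full-rank remark for small $t$ is a sensible addition, and no gap remains.
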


This corollary is quite powerful, since the factor $Q$ in the QR decomposition of an $m \times p$ matrix can be computed in merely $2mp^2 - \frac{2}{3}p^3$ operations~\cite[Appendix C]{higham2008functions}, rendering the approximations $\mathcal{Q}\left( Y\alpha_n(t^2 H^*H) + tH\beta_n(t^2 H^*H) \right)$ very cheap to compute.  Note also that these approximations are rotationally equivariant, by an argument similar to the one leading up to~(\ref{grassmaninvariant}).

\subsection{Exponentiation on the Stiefel Manifold} \label{sec:stiefelexp}

The next manifold we consider is the Stiefel manifold
\[
St(p,m) = \{Y \in \mathbb{C}^{m \times p} \mid Y^*Y=I\}.
\]
Unlike the Grassmannian, here we do not regard matrices $Y \in \mathbb{C}^{m \times p}$ as representatives of equivalence classes; each $Y \in \mathbb{C}^{m \times p}$ corresponds to a distinct element of $St(p,m)$.
The tangent space to $St(p,m)$ at $Y \in St(p,m)$ is given by
\[
T_Y St(p,m) = \{Y\Omega + Y_\perp K \mid \Omega = -\Omega^* \in \mathbb{C}^{p \times p}, K \in \mathbb{C}^{(m-p) \times p} \},
\]
where $Y_\perp \in \mathbb{C}^{m \times (m-p)}$ is any matrix such that $\begin{pmatrix} Y & Y_\perp \end{pmatrix}$ is unitary~\cite[Equation 2.5]{edelman1998geometry}.  
With respect to the canonical metric on $St(p,m)$~\cite[Section 2.4]{edelman1998geometry}, the Riemannian exponential $\mathrm{Exp}^{St}_Y : T_Y St(p,m) \rightarrow St(p,m)$ at $Y \in \mathbb{C}^{m \times p}$ in the direction $H = Y\Omega + Y_\perp K \in \mathbb{C}^{m \times p}$ is given by~\cite[Equation 2.42]{edelman1998geometry}
\begin{equation} \label{stiefelexp}
\mathrm{Exp}^{St}_Y H = \begin{pmatrix} Y & Y_\perp \end{pmatrix} \exp \begin{pmatrix} \Omega & -K^* \\ K & 0 \end{pmatrix} \begin{pmatrix} I \\ 0 \end{pmatrix}.
\end{equation}
As an aside, we remark that a different exponential map is obtained if one endows $St(p,m)$ with the metric inherited from the embedding of $St(p,m)$ in Euclidean space~\cite[Section 2.2]{edelman1998geometry}.  We do not consider the latter exponential map in this paper.

There exist algorithms for calculating~(\ref{stiefelexp}) in $O(mp^2)$ operations, the simplest of which involves calculating the $QR$ decomposition of a certain $m \times p$ matrix and then exponentiating a $2p \times 2p$ skew symmetric matrix~\cite[Corollary 2.2]{edelman1998geometry}.  Our aim below is to derive a competitive algorithm for approximating~(\ref{stiefelexp}) to high order using projected polynomials.  

Before doing so, it is important to note that the right-hand side of~(\ref{stiefelexp}) reduces to more familiar expressions in two special cases.  First, when $Y^* H = \Omega = 0$, the right-hand side of~(\ref{stiefelexp}) coincides with the right-hand side of~(\ref{grassmanexp}), the Riemmannian exponential on the Grassmannian.  
On the other hand, if $m=p$ and $Y=I$, then $K$ and $Y_\perp$ are empty matrices and the right-hand side of~(\ref{stiefelexp}) reduces to $e^\Omega$, the exponential of a skew-Hermitian matrix.  Thus, in an effort to generalize Theorems~\ref{thm:polarexp} and~\ref{thm:grassmanexp}, we seek to approximate~(\ref{stiefelexp}) with projected polynomials that reduce to the ones appearing in Theorems~\ref{thm:polarexp} and~\ref{thm:grassmanexp} in those special cases.

In view of Theorem~\ref{thm:grassmanexp} and the identities $Y^*H=\Omega$ and $H^*H = -\Omega^2+K^*K$, it is natural to consider approximations of~(\ref{stiefelexp}) of the form
\begin{equation} \label{stiefelansatz}
\mathrm{Exp}^{St}_Y (tH) \approx \mathcal{P}(Yq(t^2 H^*H,tY^*H) + tHr(t^2H^*H,tY^*H)),
\end{equation}
where $q(x,y)$ and $r(x,y)$ are polynomials in the (non-commuting) variables $x$ and $y$.  In order to ensure that these approximations recover those appearing in Theorems~\ref{thm:polarexp} and~\ref{thm:grassmanexp}, we insist that:
\begin{enumerate}[label=(\ref*{sec:stiefelexp}.\roman*),ref=\ref*{sec:stiefelexp}.\roman*]
\item \label{polycond1} $q(x,0)=\alpha_n(x)$.
\item \label{polycond2} $r(x,0)=\beta_n(x)$. 
\item \label{polycond3} $q(-x^2,x)$ and $r(-x^2,x)$ are polynomials of degree at most $n$ and $n-1$, respectively, satisfying
\[
q(-x^2,x) + x r(-x^2,x) = \Theta_n(x).
\]
\end{enumerate}

It turns out that such approximations can be constructed, but they lack the superconvergence enjoyed by the approximations in Theorems~\ref{thm:polarexp} and~\ref{thm:grassmanexp} (unless $\Omega=0$ or $m=p$).  The difficulty becomes apparent if one compares $\mathcal{P}(Y+tH)$ with $\mathrm{Exp}^{St}_Y(tH)$ for generic $Y \in St(p,m)$ and $H = Y\Omega + Y_\perp K \in T_Y St(p,m)$.  As $t \rightarrow 0$, one observes numerically that $\mathcal{P}(Y+tH) = \mathrm{Exp}^{St}_Y(tH) + O(t^2)$ (unless $\Omega=0$ or $m=p$).\footnote{The astute reader may notice that this appears to contradict Theorem 4.9 of~\cite{absil2012projection}, which states, among other things, that projective retractions (see~\cite[Example 4.5]{absil2012projection}) are automatically second-order.  However, it is not the exponential map~(\ref{stiefelexp}) that $\mathcal{P}(Y+tH)$ approximates to second order.  Rather, it is exponential map associated with the metric inherited from the embedding of $St(p,m)$ in $\mathbb{C}^{m \times p}$.}   This contrasts starkly with the situation in Theorems~\ref{thm:polarexp} and~\ref{thm:grassmanexp}, where the polar decomposition of the first-order Taylor approximant of the exponential had superconvergent error $O(t^3)$.  The following theorem confirms this observation and provides a couple of higher-order approximations of~(\ref{stiefelexp}).  In it, we use $\|\cdot\|$ to denote the Frobenius norm.  

\begin{theorem} \label{thm:stiefelexp}
Let $Y \in St(p,m)$ and $H \in T_Y St(p,m)$.  Define
\begin{align*}
\gamma_1(x,y) &= 1, & \delta_1(x,y) &= 1, \\
\gamma_2(x,y) &= 1-\frac{1}{3}x-\frac{1}{2}y^2, & \delta_2(x,y) &= 1+\frac{1}{2}y, \\
\gamma_3(x,y) &= 1-\frac{2}{5}x-\frac{1}{2}y^2-\frac{1}{6}y^3-\frac{1}{6}xy, & \delta_3(x,y) &= 1-\frac{1}{15}x+\frac{1}{2}y.
\end{align*}
For $n=1,2,3$, we have
\begin{equation} \label{stiefelapprox}
\mathrm{Exp}^{St}_Y (tH) = \mathcal{P}\left(Y\gamma_n(t^2 H^*H, tY^*H) + tH\delta_n(t^2 H^*H, tY^*H)\right) + E,
\end{equation}
where
\begin{equation} \label{stiefelapprox2}
E = \begin{cases}
O(t^{2n+1}) &\mbox{ if $Y^*H=0$ or $m=p$}, \\
O(t^{n+1}) &\mbox{ otherwise.}
\end{cases}
\end{equation}
In addition, for every polynomial $q(x,y)$ and $r(x,y)$ satisfying~(\ref{polycond1}-\ref{polycond3}) ($1 \le n \le 3$), there exists $Y \in St(p,m)$, $H \in T_Y St(p,m)$, $C>0$, and $t_0>0$ such that
\begin{equation} \label{stiefelerror}
\left\|\mathrm{Exp}^{St}_Y (tH) - \mathcal{P}\left(Yq(t^2 H^*H, tY^*H) + tHr(t^2 H^*H, tY^*H)\right)\right\| \ge Ct^{n+1}
\end{equation}
for every $t \le t_0$.
\end{theorem}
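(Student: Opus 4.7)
The statement breaks into an upper-bound claim~(\ref{stiefelapprox})--(\ref{stiefelapprox2}) and a matching lower-bound claim~(\ref{stiefelerror}), which I would attack separately after a common Taylor-expansion setup.

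For the upper bound I would first handle the two special regimes. When $Y^*H=0$ the ansatz collapses, via~(\ref{polycond1}) and~(\ref{polycond2}), to the Grassmannian approximant, so the error is $O(t^{2n+1})$ by Theorem~\ref{thm:grassmanexp}. When $m=p$, $Y_\perp$ is empty, $H=Y\Omega$, and $H^*H=-(Y^*H)^2$; condition~(\ref{polycond3}) then collapses the argument of $\mathcal{P}$ to $Y\Theta_n(t\Omega)$, and the left-unitary invariance $\mathcal{P}(Y\cdot)=Y\mathcal{P}(\cdot)$ together with Theorem~\ref{thm:polarexp} again yields $O(t^{2n+1})$. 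For the generic case I would compute both sides directly: using the block form $Z=\begin{pmatrix}\Omega & -K^*\\ K & 0\end{pmatrix}$, the recursion $P_{k+1}=\Omega P_k-K^*KP_{k-1}$ for the columns $Z^k\begin{pmatrix}I\\0\end{pmatrix}$, and the identities $\Omega=Y^*H$, $K^*K=H^*H+(Y^*H)^2$, $Y_\perp K=H-Y\Omega$, one rewrites $\mathrm{Exp}^{St}_Y(tH)=YE(t)+HF(t)$ with $E,F$ series in noncommutative polynomials of $H^*H$ and $Y^*H$. Since $Y^*H$ is skew-Hermitian, $A(t)^*A(t)=I+t^2C(t)$, so the binomial series for $(I+t^2C)^{-1/2}$ delivers a clean Taylor expansion of $\mathcal{P}(A(t))=A(t)(A^*A)^{-1/2}$. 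Matching term by term through order $t^n$ recovers exactly the polynomials $\gamma_n,\delta_n$, and writing out the leading $t^{n+1}$-residual completes the upper bound; for $n=1$ a short calculation gives $-\tfrac12 Y_\perp K\Omega\,t^2$.

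For the lower bound I would parametrize the freedom left by~(\ref{polycond1})--(\ref{polycond3}): any admissible $(q,r)$ differs from $(\gamma_n,\delta_n)$ by polynomials that vanish on the slice $x=-y^2$ and, for $q$, at $y=0$, so upon substitution these corrections become matrix expressions divisible by $t^2 K^*K$. Propagating such perturbations through the polar expansion already computed, I would show they affect $\mathcal{P}(A(t))$ only at orders $t^{n+2}$ and higher, so the $t^{n+1}$ residual is identical for every admissible $(q,r)$. Exhibiting a low-dimensional example $(Y,H)$ with $\Omega\ne0$ and $K\ne0$ on which this residual has nonzero Frobenius norm (a $p=1$ case suffices), and choosing $C$ slightly below that norm with $t_0$ small enough to dominate the $O(t^{n+2})$ remainder, then delivers~(\ref{stiefelerror}).

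The hardest step will be this perturbation analysis: certifying that corrections to $(\gamma_n,\delta_n)$ compatible with~(\ref{polycond1})--(\ref{polycond3}) cannot cancel the $t^{n+1}$ obstruction. Because $\mathcal{P}$ is nonlinear, a perturbation to $A(t)$ at order $t^2$ can interact with the leading $Y+tH$ through the linearization $\mathrm{d}\mathcal{P}|_Y F=F-Y\,\mathrm{Herm}(Y^*F)$ and produce a $t^3$ correction in $\mathcal{P}(A(t))$; I would need to track these cross-terms carefully for each of $n=1,2,3$ to confirm that the surviving residual lies outside the image of the allowed perturbations.
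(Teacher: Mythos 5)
Your general architecture (reduce to the Grassmannian case when $Y^*H=0$ and to the unitary case when $m=p$, then analyze the generic case via a Taylor expansion of the polar map) is sound, and the two special-case reductions match the paper's proof exactly. For the generic case you propose directly expanding $\mathcal{P}(A(t))=A(t)(A^*A)^{-1/2}$ using the binomial series; your observation that $A^*A=I+t^2C(t)$ (which hinges on the skew-Hermiticity of $Y^*H$) is correct and makes this feasible, and your $n=1$ residual $-\tfrac12 t^2 Y_\perp K\Omega$ is right. This is a legitimate alternative to what the paper does, which instead invokes Lemma~\ref{lemma:symmetry} to bound $\|\mathcal{P}(A)-\widetilde U\|$ by the two quantities $\|\mathrm{skew}(\widetilde U^*A)\|$ and $\|(I-\widetilde U\widetilde U^*)A\|$, and then uses Lemma~\ref{lemma:rhsterms} to reduce both to the Taylor expansion of $e^{-tZ}\begin{pmatrix}q+t\Omega r\\ tKr\end{pmatrix}$. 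The paper's route avoids expanding the inverse square root and cleanly isolates the two distinct sources of error, which is what makes the coefficient optimization transparent.

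The lower-bound argument, however, has a genuine gap, and you half-diagnose it in your closing paragraph without resolving it. The claim that admissible corrections to $(\gamma_n,\delta_n)$ become ``matrix expressions divisible by $t^2K^*K$'' and hence affect $\mathcal{P}(A(t))$ ``only at orders $t^{n+2}$ and higher'' is false. For $n=2$, the admissible pairs are exactly $q=1-\tfrac13 x + cy^2$, $r=1-cy$, so $\Delta q=(c+\tfrac12)y^2$ and $\Delta r=-(c+\tfrac12)y$; substitution gives $Y\Delta q + tH\Delta r = -(c+\tfrac12)t^2 Y_\perp K\Omega$, which is \emph{linear} in $K$, not divisible by $K^*K$, and lies in the range of $Y_\perp$, so $\mathrm{d}\mathcal{P}|_Y$ passes it through unchanged at order $t^2$, not $t^4$. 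Consequently, for $c\neq -\tfrac12$ the error is $\Theta(t^2)$ rather than $\Theta(t^3)$: the paper's explicit computation gives $\|(I-\widetilde U\widetilde U^*)A\| = |c+\tfrac12|\,t^2\|K\Omega\|+O(t^4)$, which by Lemma~\ref{lemma:symmetry} forces the error to be $\gtrsim t^2$. The lower bound~(\ref{stiefelerror}) therefore holds because suboptimal choices make the error strictly \emph{worse}, not because the $t^{n+1}$ residual is invariant across admissible $(q,r)$. Establishing this requires the case-by-case perturbation expansion you defer — and once you do it, you are essentially reproducing the paper's parametrization-and-optimization argument over the free constant(s) for each $n=1,2,3$, so the ``divisibility by $t^2K^*K$'' heuristic saves nothing.
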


Written more explicitly, the approximants provided by Theorem~\ref{thm:stiefelexp} read
\begin{align}
\mathrm{Exp}^{St}_Y (tH) &\approx \mathcal{P}(Y + tH), \label{stiefelexp1} \\
\mathrm{Exp}^{St}_Y (tH) &\approx \mathcal{P}\left(Y\left(I-\frac{1}{3}t^2H^*H-\frac{1}{2}t^2(Y^*H)^2\right) + tH\left(I+\frac{1}{2}t Y^*H\right) \right), \label{stiefelexp2} \\
\mathrm{Exp}^{St}_Y (tH) &\approx \mathcal{P}\left(Y \left( I + t^2\left(-\frac{2}{5}H^*H-\frac{1}{2}(Y^*H)^2\right) + t^3\left(-\frac{1}{6}H^*HY^*H-\frac{1}{6}(Y^*H)^3\right)\right) \right.\nonumber\\&\hspace{0.4in}\left. + tH \left(I + \frac{1}{2}tY^*H -\frac{1}{15}t^2H^*H\right) \right). \label{stiefelexp3}
\end{align}
All of these are rotationally equivariant by an argument similar to the one leading up to~(\ref{grassmaninvariant}).  

Observe that when $Y^*H=\Omega=0$, the right-hand sides of~(\ref{stiefelexp1}-\ref{stiefelexp3}) reduce to those in~(\ref{grassmanexp2}-\ref{grassmanexp3}), respectively.  Likewise, when $m=p$ (so that $H=Y\Omega$ and $YY^*=I$), they reduce to 
$Y\mathcal{P}(\Theta_1(tH))$, $Y\mathcal{P}(\Theta_2(tH))$, and $Y\mathcal{P}(\Theta_3(tH))$, 
respectively, which are precisely the approximations of $Ye^{\Omega}$ provided by Theorem~\ref{thm:polarexp}.  See Section~\ref{sec:stiefelexpproof} for details.

In view of the complexity of~(\ref{stiefelexp2}) and~(\ref{stiefelexp3}), we do not believe that a general formula (valid for all $n$) can be conveniently written down for polynomials $q(x,y)$ and $r(x,y)$ satisfying~(\ref{polycond1}-\ref{polycond3}) that deliver approximations of~(\ref{stiefelexp}) of the form~(\ref{stiefelansatz}) with error of optimal order.  (As a matter of fact, the polynomials $\gamma_3(x,y)$ and $\delta_3(x,y)$ are not even uniquely determined by these conditions.)  However, the proofs presented in Section~\ref{sec:stiefelexpproof} demonstrate how one can construct such polynomials.

Note that if the conditions~(\ref{polycond1}-\ref{polycond3}) are relaxed, then it is straightforward to construct approximations of~(\ref{stiefelexp}) with error $O(t^{n+1})$: Simply truncate the Taylor series for $\exp\begin{pmatrix} \Omega & -K^* \\ K & 0 \end{pmatrix}$, insert the result into~(\ref{stiefelexp}), and express the result in terms of $Y$ and $H$ using the identities $H=Y\Omega+Y_\perp K$, $Y^*H=\Omega$, and $H^*H = -\Omega^2+K^*K$.  If desired, the result can be orthonormalized with the map $\mathcal{P}$, retaining the order of accuracy of the approximation. For this reason, Theorem~\ref{thm:stiefelexp} is less powerful than Theorems~\ref{thm:polarexp} and~\ref{thm:grassmanexp}, and it underscores the complexity of the Stiefel manifold relative to the Grassmannian and the unitary group. 
For more evidence of the computational difficulties inherent to the Stiefel manifold, we refer the reader to~\cite{edelman1998geometry}.

\subsection{Geometric and Arithmetic Means of Unitary Matrices}

We conclude this section by stating two results that concern the supercloseness of certain means of unitary matrices.  At the surface, these results might not appear to be closely related to Theorems~\ref{thm:polarexp},~\ref{thm:grassmanexp},~\ref{thm:stiefelexp}, but in fact they follow from the same general theory.

Our first result reveals that the polar decomposition of the (componentwise) linear interpolant of two unitary matrices $U_1$ and $U_2$ is superclose to the geodesic joining $U_1$ and $U_2$.

\begin{proposition} \label{thm:polargeodesic}
Let $U_1 \in \mathbb{C}^{m \times m}$ be unitary, let $\Omega \in \mathbb{C}^{m \times m}$ be skew-Hermitian, and let $U_2 = U_1 e^{t\Omega}$.  Assume that $(1-s)U_1 + s U_2$ is nonsingular for each $s \in [0,1]$. Then
\[
\mathcal{P}((1-s)U_1 + s U_2) = U_1 e^{st\Omega} + O(t^3)
\]
for every $s \in [0,1/2) \cup (1/2,1]$.  When $s=1/2$, the equality $\mathcal{P}((1-s)U_1 + s U_2) = U_1 e^{st\Omega}$ holds exactly.
\end{proposition}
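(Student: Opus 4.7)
The plan is to exploit the identity $\mathcal{P}(VA) = V\mathcal{P}(A)$ for unitary $V$ (equation~(\ref{polarinvariance})) twice in order to peel off both $U_1$ and $e^{st\Omega}$, reducing the problem to the polar decomposition of a single nearly-Hermitian matrix. Specifically, I would first write $(1-s)U_1 + sU_2 = U_1\bigl((1-s)I + se^{t\Omega}\bigr)$, and then introduce the symmetric factorization
\[
(1-s)I + se^{t\Omega} = e^{st\Omega}\,G_s(t\Omega), \qquad G_s(A) := (1-s)e^{-sA} + se^{(1-s)A},
\]
so that $\mathcal{P}((1-s)U_1 + sU_2) = U_1 e^{st\Omega}\,\mathcal{P}(G_s(t\Omega))$. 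The entire proof then reduces to analyzing $\mathcal{P}(G_s(t\Omega))$.

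The case $s=1/2$ is essentially immediate: $G_{1/2}(t\Omega) = \tfrac{1}{2}\bigl(e^{-t\Omega/2} + e^{t\Omega/2}\bigr) = \cosh(t\Omega/2)$, which is Hermitian because $\Omega$ is skew-Hermitian. Under the nonsingularity hypothesis and for $t$ small enough that all eigenvalues of $\cos(t\Omega/(2i))$ are positive (which is implicit in an asymptotic statement), $\cosh(t\Omega/2)$ is Hermitian positive definite, hence $\mathcal{P}(\cosh(t\Omega/2)) = I$. Multiplying back through the factorization gives the asserted exact equality.

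For $s\ne 1/2$ I would Taylor-expand
\[
G_s(A) = I + \sum_{k\ge 1}\frac{(1-s)(-s)^k + s(1-s)^k}{k!}A^k,
\]
and observe the two key cancellations: the $k=1$ coefficient is $-s(1-s)+s(1-s)=0$, and the $k=2$ coefficient equals $s(1-s)/2$. Thus $G_s(t\Omega) = I + \tfrac{s(1-s)}{2}(t\Omega)^2 + O(t^3)$. Since $(t\Omega)^2$ is Hermitian (because $\Omega^* = -\Omega$), the entire skew-Hermitian part of $G_s(t\Omega)-I$ is of order $t^3$. A one-line expansion of $U = G_s(t\Omega)\bigl(G_s(t\Omega)^*G_s(t\Omega)\bigr)^{-1/2}$ — in which the Hermitian $O(t^2)$ correction cancels between the two factors while the skew $O(t^3)$ contribution survives — then yields $\mathcal{P}(G_s(t\Omega)) = I + O(t^3)$, and substituting back into the factorization completes the proof.

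The main obstacle is really the entire engine of the argument: recognizing the symmetric factorization that introduces $e^{st\Omega}$ as a multiplicative factor on the left of $G_s$. Without this trick, the naive expansion of $(1-s)I+se^{t\Omega}$ begins with the nonvanishing linear term $st\Omega$, making an $O(t^3)$ bound look unattainable. Once the factorization is in hand, the vanishing of the linear Taylor coefficient of $G_s$ and the skew-symmetry of $(t\Omega)^2$ — both mildly surprising on their own — combine to deliver the superconvergence in a nearly automatic way.
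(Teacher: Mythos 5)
Your proposal is correct and follows essentially the same approach as the paper. Your factorization $(1-s)I+se^{t\Omega}=e^{st\Omega}G_s(t\Omega)$ followed by the unitary-invariance peel is exactly the paper's move of normalizing $U_1=I$ and then applying Lemma~\ref{lemma:symmetrysimple} with $\widetilde U = e^{st\Omega}$, so that $G_s(t\Omega)=\widetilde U^*A$ is the very object whose skew part the paper expands; your Taylor series and both cancellations ($k=1$ vanishing for all $s$, all odd $k$ vanishing at $s=1/2$) coincide with the paper's. The only difference is the last step: you expand $\mathcal{P}(G_s)=G_s(G_s^*G_s)^{-1/2}$ directly (which works, and is a clean computation) whereas the paper invokes the perturbation bound of Lemma~\ref{lemma:symmetrysimple} to convert $\|\mathrm{skew}(\widetilde U^*A)\|=O(t^3)$ into $\|\mathcal{P}(A)-\widetilde U\|=O(t^3)$ -- a step which the paper reuses across all of its results and which you would also need to justify carries the same implicit ``$t$ small'' restriction you flag for the $s=1/2$ case.
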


The case $s=1/2$ in the preceding lemma recovers the well-known observation (see~\cite[Theorem 4.7]{higham2005functions} and~\cite[Equation (3.14)]{moakher2002means}) 
that the unitary factor $U=\mathcal{P}(\frac{1}{2}(U_1+U_2))$ in the polar decomposition of $\frac{1}{2}(U_1+U_2)$ is given by
\[
U = U_1 e^{\frac{1}{2}\log(U_1^* U_2)} = U_1 (U_1^* U_2)^{1/2}
\]
whenever $U_1$ and $U_2$ are (non-antipodal) members of the unitary group.

Our second result of this subsection generalizes the preceding proposition in the following way.  Let
\[
\mathbb{A}(U_1,\dots,U_n;w) = \argmin_{\substack{V \in \mathbb{C}^{m \times m},\\ V^*V=I}} \sum_{i=1}^n w_i \|V-U_i\|^2
\]
denote the weighted \emph{arithmetic mean}~\cite[Definition 5.1]{moakher2002means} of unitary matrices $U_1,\dots,U_n \in \mathbb{C}^{m \times m}$, where $w \in \mathbb{R}^n$ is a vector of weights summing to 1.  Let 
\[
\mathbb{G}(U_1,\dots,U_n;w) = \argmin_{\substack{V \in \mathbb{C}^{m \times m}, \\ V^*V=I}} \sum_{i=1}^n w_i \, \mathrm{dist}(V,U_i)^2
\]
denote their weighted \emph{geometric mean}~\cite[Definition 5.2]{moakher2002means}, where
\[
\mathrm{dist}(U,V) = \frac{1}{\sqrt{2}}\|\log(U^*V)\|
\]
denotes the geodesic distance on the unitary group~\cite[Equation (2.6)]{moakher2002means}.  It can be shown~\cite[Proposition 4]{gawlik2016embedding} that $\mathbb{A}(U_1,\dots,U_n;w)$ exists whenever $\sum_{i=1}^n w_i U_i$ is nonsingular, and is given explicitly by
\begin{equation} \label{arithformula}
\mathbb{A}(U_1,\dots,U_n;w) = \mathcal{P}\left(\sum_{i=1}^n w_i U_i \right).
\end{equation}
On the other hand, $\mathbb{G}(U_1,\dots,U_n;w)$ is characterized implicitly by the condition~\cite[p. 14]{moakher2002means}
\begin{equation} \label{geomeanEL}
\sum_{i=1}^n w_i \log( \mathbb{G}(U_1,\dots,U_n;w)^* U_i) = 0.
\end{equation}
The following theorem reveals that if the data $U_1,\dots,U_n$ are nearby, then their weighted arithmetic and geometric means are superclose.

\begin{theorem} \label{thm:geoarith}
Let $U_i : [0,T] \rightarrow \mathbb{C}^{m \times m}$, $i=1,2,\dots,n$, be continuous functions on $[0,T]$ such that $U_i(t)$ is unitary for each $t \in [0,T]$. 
Suppose that there exists $C>0$ such that
\begin{equation} \label{distanceassumption}
\mathrm{dist}(U_i(t),U_j(t)) \le Ct
\end{equation}
for every $i,j=1,2,\dots,n$ and every $t \in [0,T]$.
Then, for any $w \in \mathbb{R}^n$ with entries summing to 1,
\[
\mathbb{A}(U_1(t),\dots,U_n(t);w) = \mathbb{G}(U_1(t),\dots,U_n(t);w) + O(t^3).
\]
\end{theorem}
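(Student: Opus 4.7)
The plan is to exploit rotational equivariance to put the data in a convenient form and then Taylor-expand both means through quadratic order. From~(\ref{polarinvariance}) and the left-invariance of the variational problem defining $\mathbb{G}$, both $\mathbb{A}$ and $\mathbb{G}$ transform equivariantly when every $U_i$ is left-multiplied by a fixed unitary matrix. Replacing $U_i(t)$ by $U_1(t)^* U_i(t)$, I may therefore assume $U_1(t) = I$ for all $t$, in which case the bi-invariance of $\mathrm{dist}$ and hypothesis~(\ref{distanceassumption}) guarantee that for $t$ sufficiently small I may write $U_i(t) = e^{\Omega_i(t)}$ with $\Omega_i(t)$ skew-Hermitian, $\Omega_1(t)=0$, and $\|\Omega_i(t)\| \le \sqrt{2}\,Ct$. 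Set $P = \sum_i w_i \Omega_i$ and $Q = \sum_i w_i \Omega_i^2$; then $P = O(t)$ is skew-Hermitian (because the $w_i$ are real) while $Q = O(t^2)$ is Hermitian (because $\Omega_i^2$ is Hermitian for skew-Hermitian $\Omega_i$).

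Next I would expand the geometric mean. Writing $\mathbb{G}(t) = e^{\Psi(t)}$, the characterization~(\ref{geomeanEL}) combined with the Baker--Campbell--Hausdorff formula gives
\[
0 = \sum_i w_i \log\!\bigl(e^{-\Psi} e^{\Omega_i}\bigr) = -\Psi + P - \tfrac{1}{2}[\Psi, P] + O(t^3).
\]
The implicit function theorem (the $\Psi$-derivative of the left-hand side at $\Psi=0$, $t=0$ equals $-I$) produces a unique small solution $\Psi(t) = O(t)$; bootstrapping then gives $\Psi = P + O(t^2)$, whence $[\Psi, P] = [P,P] + O(t^3) = O(t^3)$, and so $\Psi = P + O(t^3)$. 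Expanding the exponential yields
\[
\mathbb{G}(t) = I + P + \tfrac{1}{2}P^2 + O(t^3).
\]

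For the arithmetic mean, formula~(\ref{arithformula}) gives $\mathbb{A}(t) = \mathcal{P}(M(t))$ with $M(t) = \sum_i w_i e^{\Omega_i} = I + P + \tfrac{1}{2}Q + O(t^3)$. Using $P^* = -P$ and $Q^* = Q$, the linear-in-$P$ cross terms in $M^*M$ cancel, leaving $M^*M = I + Q - P^2 + O(t^3)$. Since $M^*M = I + O(t^2)$, a Neumann expansion of the inverse square root gives $(M^*M)^{-1/2} = I - \tfrac{1}{2}Q + \tfrac{1}{2}P^2 + O(t^3)$, and multiplying out I obtain
\[
\mathbb{A}(t) = M(M^*M)^{-1/2} = I + P + \tfrac{1}{2}P^2 + O(t^3),
\]
in which the $\tfrac{1}{2}Q$ contributions cancel automatically. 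Matching term by term with the expansion of $\mathbb{G}(t)$ above delivers the claimed $O(t^3)$ bound.

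The chief obstacle I anticipate is bookkeeping these simultaneous expansions and, in particular, verifying existence, smallness, and uniqueness of the branch $\Psi(t)$: since $w$ may have negative entries, the variational definition of $\mathbb{G}$ need not be convex, so I would work from the implicit form~(\ref{geomeanEL}) rather than from the minimization characterization. The remaining ingredients---invertibility of $M(t)$ for small $t$, smoothness of $A \mapsto (A^*A)^{-1/2}$ near $I$, and real-analyticity of $\exp$ and BCH near $0$---are standard and keep the analytic technicalities routine.
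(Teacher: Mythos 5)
Your argument is correct, but it follows a genuinely different path from the one in the paper. You reduce to $U_1(t)=I$ by left-invariance, write $U_i(t)=e^{\Omega_i(t)}$, and then Taylor-expand \emph{each} mean to second order—$\mathbb{G}$ via the Euler--Lagrange condition~(\ref{geomeanEL}) together with BCH and an implicit-function argument, and $\mathbb{A}$ via the explicit formula $\mathcal{P}(M)=M(M^*M)^{-1/2}$ and a Neumann expansion—before matching the two expansions. The paper instead never computes the geometric mean at all: it sets $\widetilde U(t)=\mathbb{G}(U_1(t),\dots,U_n(t);w)$, writes $\Omega_i(t)=\tfrac1t\log(\widetilde U^*U_i)$ so that $\sum_i w_i\Omega_i=0$ by~(\ref{geomeanEL}), and observes that $\widetilde U^*A=\sum_i w_ie^{t\Omega_i}$ has skew-Hermitian part $\sum_i w_i\bigl(\tfrac12(e^{t\Omega_i}-e^{-t\Omega_i})-t\Omega_i\bigr)=O(t^3)$; Lemma~\ref{lemma:symmetrysimple} then immediately converts this into $\|\mathcal{P}(A)-\widetilde U\|=O(t^3)$. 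The paper's route buys brevity and avoids any BCH or implicit-function machinery; it also showcases the same perturbation lemma used throughout, which is a stated goal of the authors. Your route is heavier in bookkeeping but more self-contained (no appeal to the polar-decomposition perturbation bound) and makes the $O(t^2)$ agreement of the two means completely explicit. You correctly flag that~(\ref{geomeanEL}) rather than the variational characterization should be taken as the starting point when the weights may be negative, and your Neumann/BCH manipulations check out.
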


\section{Proofs} \label{sec:proofs}

In this section, we prove Theorems~\ref{thm:polarexp},~\ref{thm:grassmanexp},~\ref{thm:stiefelexp}, and~\ref{thm:geoarith} and Proposition~\ref{thm:polargeodesic}.  Our proofs are structured as follows.  In Section~\ref{sec:polarpert}, we consider the general problem of estimating
\begin{equation} \label{polardiff}
\|\mathcal{P}(A)-\widetilde{U}\|,
\end{equation}
where $A \in \mathbb{C}^{m \times p}$ ($m \ge p$) is a full-rank matrix and $\widetilde{U} \in \mathbb{C}^{m \times p}$ has orthonormal columns.  We show in Lemma~\ref{lemma:symmetry} that this quantity can be estimated by measuring (1) the extent to which $\widetilde{U}^*A$ fails to be Hermitian and (2) the discrepancy between the range of $A$ and the range of $\widetilde{U}$.  We then leverage this lemma to prove Theorem~\ref{thm:polarexp} in Section~\ref{sec:polarexpproof}, Theorem~\ref{thm:grassmanexp} in Section~\ref{sec:grassmanexpproof}, Theorem~\ref{thm:stiefelexp} in Section~\ref{sec:stiefelexpproof}, and Theorem~\ref{thm:polargeodesic} and Proposition~\ref{thm:geoarith} in Section~\ref{sec:geoarithproof}. 

It turns out that one of the theorems proved below, Theorem~\ref{thm:polarexp}, admits an alternative proof that does not rely on Lemma~\ref{lemma:symmetry}.  This alternative proof, which relies instead on a relationship between projected polynomials and Pad\'{e} approximation, is shorter than the one we present in Section~\ref{sec:polarexpproof}, so we detail it in Section~\ref{sec:pade} for completeness.  
We have chosen to retain both proofs in this paper for several reasons.  The proof in Section~\ref{sec:polarexpproof}, despite being longer, highlights the versatility of Lemma~\ref{lemma:symmetry}, a lemma whose wide-ranging applicability is, in our opinion, one of the key contributions of this paper.
The proof in Section~\ref{sec:polarexpproof} is also more elementary, in a certain sense, than that in Section~\ref{sec:pade}, since the former relies merely on well-known perturbation estimates for the polar decomposition, whereas the latter relies on Pad\'{e} approximation theory and certain results concerning the commutativity of functions of matrices.

\subsection{Perturbations of the Polar Decomposition} \label{sec:polarpert}

We begin our examination of~(\ref{polardiff}) by studying the sensitivity of the polar decomposition to perturbations.
In what follows, we continue to use $\|\cdot\|$ to denote the Frobenius norm. We denote the $i^{th}$ largest singular value of a matrix $A \in \mathbb{C}^{m \times p}$ by $\sigma_i(A)$.  If $m=p$, we use $\rho(A)$ to denote the spectral radius of $A$.  If furthermore $A$ has real eigenvalues, we denote them by $\lambda_1(A) \ge \lambda_2(A) \ge \dots \lambda_p(A)$. Note that with this convention, it need not be true that $|\lambda_1(A)| \ge |\lambda_2(A)| \ge \dots |\lambda_p(A)|$.

We denote by 
\[
\mathrm{sym}(A) = \frac{1}{2}(A+A^*)
\]
and
\[
\mathrm{skew}(A) = \frac{1}{2}(A-A^*)
\]
the Hermitian and skew-Hermitian parts of a square matrix $A$, respectively.  Note that since 
\begin{equation} \label{normtranspose}
\|A\|=\|A^*\|,
\end{equation}
we have
\begin{equation}
\|\mathrm{sym}(A)\| \le \|A\|
\end{equation}
and
\begin{equation} \label{skewbound}
\|\mathrm{skew}(A)\| \le \|A\|
\end{equation}
for any square matrix $A$. 

We will make use of the following additional properties of the Frobenius norm.
For any $A \in \mathbb{C}^{m \times p}$, any $B \in \mathbb{C}^{p \times q}$, any $C \in \mathbb{C}^{p \times p}$, and any $U \in \mathbb{C}^{m \times p}$ with orthonormal columns:
\begin{enumerate}[label=(\ref*{sec:polarpert}.\roman*),ref=\ref*{sec:polarpert}.\roman*]
\item \label{fro1} $\|A^* B\| \le \|A^*\| \sigma_1(B) = \|A\| \sigma_1(B)$~\cite[Equation (B.7)]{higham2008functions}.
\item \label{fro2} $\|U^* A\| \le \|A\|$ (This follows from~(\ref{fro1}) and~(\ref{normtranspose})).
\item \label{fro3} $\|UC\| = \|C\|$~\cite[Problem B.7]{higham2008functions}.
\item \label{fro4}  $\rho(C) \le \|C\|$~\cite[Equation (B.8)]{higham2008functions}.
\end{enumerate}
Note that~(\ref{fro1}) is sharper than the estimate $\|A^* B\| \le \|A^*\| \|B\|$, so we will frequently use~(\ref{fro1}) instead of the latter estimate.

We first recall a result concerning the stability of the polar decomposition under perturbations.  A proof is given in~\cite{li2002perturbation}.
\begin{lemma}{\cite[Theorem 2.4]{li2002perturbation}} \label{lemma:perturbation}
Let $A,\widetilde{A} \in \mathbb{C}^{m \times p}$ ($m \ge p$) be full-rank matrices with polar decompositions $A = UH$ and $\widetilde{A} = \widetilde{U} \widetilde{H}$, where $U,\widetilde{U} \in \mathbb{C}^{m \times p}$ have orthonormal columns and $H,\widetilde{H} \in \mathbb{C}^{p \times p}$ are Hermitian positive-definite.  Then
\[
\|U - \widetilde{U}\| \le \frac{2}{\sigma_p(A) + \sigma_p(\widetilde{A})} \|A-\widetilde{A}\|.
\]
\end{lemma}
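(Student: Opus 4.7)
The plan is to reduce the inequality to a scalar estimate obtained by left-multiplying the polar decomposition identity by $(U - \widetilde{U})^*$ and taking a trace, then exploiting the dichotomy between Hermitian and skew-Hermitian parts.

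First I would derive the algebraic identity
\[
2(A - \widetilde{A}) = (U - \widetilde{U})(H + \widetilde{H}) + (U + \widetilde{U})(H - \widetilde{H}),
\]
obtained by averaging the two factorizations $A - \widetilde{A} = (U - \widetilde{U})H + \widetilde{U}(H - \widetilde{H})$ and $A - \widetilde{A} = U(H - \widetilde{H}) + (U - \widetilde{U})\widetilde{H}$. Left-multiplying by $(U - \widetilde{U})^*$, setting $P := (U - \widetilde{U})^*(U - \widetilde{U})$ (Hermitian positive semidefinite, with $\mathrm{tr}(P) = \|U - \widetilde{U}\|^2$) and $Q := (U - \widetilde{U})^*(U + \widetilde{U}) = U^*\widetilde{U} - \widetilde{U}^*U$ (skew-Hermitian), and taking the trace yields
\[
2\,\mathrm{tr}\bigl((U - \widetilde{U})^*(A - \widetilde{A})\bigr) = \mathrm{tr}(P(H + \widetilde{H})) + \mathrm{tr}(Q(H - \widetilde{H})).
\]

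The crux is to observe that $\mathrm{tr}(P(H + \widetilde{H}))$ is real (trace of a product of two Hermitian matrices), while $\mathrm{tr}(Q(H - \widetilde{H}))$ is purely imaginary (trace of a product of a skew-Hermitian and a Hermitian matrix). Taking real parts therefore annihilates the second term and leaves
\[
2\,\mathrm{Re}\,\mathrm{tr}\bigl((U - \widetilde{U})^*(A - \widetilde{A})\bigr) = \mathrm{tr}(P(H + \widetilde{H})).
\]
From here the bound is immediate. Since $P \succeq 0$ and $H + \widetilde{H}$ is Hermitian positive-definite, the standard inequality $\mathrm{tr}(PB) \ge \lambda_p(B)\,\mathrm{tr}(P)$ combined with Weyl's inequality $\lambda_p(H + \widetilde{H}) \ge \lambda_p(H) + \lambda_p(\widetilde{H})$ and the identity $\lambda_p(H) = \sigma_p(A)$ (which follows from $A^*A = H^2$) yields $\mathrm{tr}(P(H + \widetilde{H})) \ge (\sigma_p(A) + \sigma_p(\widetilde{A}))\|U - \widetilde{U}\|^2$. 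On the other hand, Cauchy-Schwarz gives $\mathrm{Re}\,\mathrm{tr}((U - \widetilde{U})^*(A - \widetilde{A})) \le \|U - \widetilde{U}\|\,\|A - \widetilde{A}\|$. Dividing the resulting inequality through by $\|U - \widetilde{U}\|$ (the case $U = \widetilde{U}$ being trivial) gives the lemma.

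The main obstacle is spotting the symmetric identity in the first step and recognizing that, upon pre-multiplying by $(U - \widetilde{U})^*$, the two terms on the right-hand side decompose into a Hermitian-Hermitian trace and a skew-Hermitian-Hermitian trace whose real parts separate cleanly. Once this structural observation is made, the rest of the argument is a one-line combination of a standard trace inequality, Weyl, and Cauchy-Schwarz.
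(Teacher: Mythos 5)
Your proof is correct. Note, however, that the paper does not supply its own proof of this lemma; it is quoted verbatim from \cite[Theorem 2.4]{li2002perturbation}, so there is no in-paper argument to compare against. What you have written is a complete and valid derivation, and it is in fact the standard route to this bound (essentially Li's own argument): the symmetric identity $2(A-\widetilde{A}) = (U-\widetilde{U})(H+\widetilde{H}) + (U+\widetilde{U})(H-\widetilde{H})$, left-multiplication by $(U-\widetilde{U})^*$, and the key structural observation that $P=(U-\widetilde{U})^*(U-\widetilde{U})$ is Hermitian while $Q=(U-\widetilde{U})^*(U+\widetilde{U})=U^*\widetilde{U}-\widetilde{U}^*U$ is skew-Hermitian (this relies on $U^*U=\widetilde{U}^*\widetilde{U}=I$), so that taking the real part of the trace annihilates the $Q(H-\widetilde{H})$ contribution. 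The remaining trace estimate $\mathrm{tr}(PB)\ge\lambda_p(B)\,\mathrm{tr}(P)$ for $P\succeq 0$ and $B$ Hermitian, together with Weyl's inequality $\lambda_p(H+\widetilde{H})\ge\lambda_p(H)+\lambda_p(\widetilde{H})$, the identification $\lambda_p(H)=\sigma_p(A)$ coming from $A^*A=H^2$, and Cauchy--Schwarz in the Frobenius inner product, all go through exactly as you state. The trivial case $U=\widetilde{U}$ is correctly set aside before dividing. No gaps.
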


Next, we consider a full-rank matrix $A \in \mathbb{C}^{m \times p}$ with polar decomposition $A=UH$, and we use the preceding lemma to show that the distance from $U$ to any other matrix $\widetilde{U} \in \mathbb{C}^{m \times p}$ (sufficiently close to $A$) with orthonormal columns is controlled by two properties: (1) the extent to which $\widetilde{U}^* A$ fails to be Hermitian, and (2) the discrepancy between the range of $A$ and the range of $\widetilde{U}$.

\begin{lemma} \label{lemma:symmetry}
Let $A \in \mathbb{C}^{m \times p}$ ($m \ge p$) be a full-rank matrix with polar decomposition $A=UH$, where $U = \mathcal{P}(A) \in \mathbb{C}^{m \times p}$ has orthonormal columns and $H \in \mathbb{C}^{p \times p}$ is Hermitian positive-definite.  Then, for any matrix $\widetilde{U} \in \mathbb{C}^{m \times p}$ with orthonormal columns satisfying $\|A-\widetilde{U}\| < 1$, we have
\begin{equation} \label{ineq}
\frac{\max\left\{ 2\|\mathrm{skew}(\widetilde{U}^*A)\|,  \|(I-\widetilde{U}\widetilde{U}^*)A\| \right\}}{2\sigma_1(A)} \le \|U - \widetilde{U}\|  \le \frac{2\left(\|\mathrm{skew}(\widetilde{U}^* A)\| + \|(I-\widetilde{U}\widetilde{U}^*)A\|\right)}{\sigma_p(A)+\sigma_p(\mathrm{sym}(\widetilde{U}^* A))}.
\end{equation}
\end{lemma}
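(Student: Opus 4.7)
The plan is to establish the upper and lower bounds separately. The upper bound will follow by applying Lemma~\ref{lemma:perturbation} to a carefully chosen auxiliary matrix, while the lower bound will be handled by direct manipulation using the factorization $A = UH$.

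\emph{Upper bound.} The strategy is to compare $A$ to an auxiliary matrix $\widetilde{A} := \widetilde{U}\widetilde{H}$ whose polar decomposition has $\widetilde{U}$ as its orthogonal factor, and then apply Lemma~\ref{lemma:perturbation}. The natural choice is $\widetilde{H} := \mathrm{sym}(\widetilde{U}^*A)$, which is Hermitian by construction. A direct calculation yields the identity
\[
A - \widetilde{A} = (I - \widetilde{U}\widetilde{U}^*)A + \widetilde{U}\,\mathrm{skew}(\widetilde{U}^*A),
\]
so the triangle inequality together with~(\ref{fro3}) bounds $\|A - \widetilde{A}\|$ by the numerator appearing in the upper bound. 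The hypothesis $\|A - \widetilde{U}\| < 1$ enters here to ensure that $\widetilde{H}$ is positive definite: writing $\widetilde{H} = I + \mathrm{sym}(\widetilde{U}^*(A - \widetilde{U}))$, the smallest eigenvalue of $\widetilde{H}$ is at least $1 - \|A - \widetilde{U}\| > 0$. Consequently, $\widetilde{A}$ is full-rank with polar decomposition $\widetilde{U}\widetilde{H}$, we have $\sigma_p(\widetilde{A}) = \sigma_p(\mathrm{sym}(\widetilde{U}^*A))$, and Lemma~\ref{lemma:perturbation} delivers the claimed estimate.

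\emph{Lower bound.} It suffices to show that each of $\|\mathrm{skew}(\widetilde{U}^*A)\|$ and $\|(I - \widetilde{U}\widetilde{U}^*)A\|$ is bounded by $\sigma_1(A)\,\|U - \widetilde{U}\|$. For the skew-Hermitian term, note that $U^*A = H$ is Hermitian, hence $\mathrm{skew}(U^*A) = 0$ and therefore $\mathrm{skew}(\widetilde{U}^*A) = \mathrm{skew}((\widetilde{U}-U)^*A)$; invoking~(\ref{skewbound}) and~(\ref{fro1}) yields $\|\mathrm{skew}(\widetilde{U}^*A)\| \le \|(\widetilde{U}-U)^*A\| \le \sigma_1(A)\,\|U - \widetilde{U}\|$. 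For the range-discrepancy term, use the identity $(I - \widetilde{U}\widetilde{U}^*)\widetilde{U} = 0$ to rewrite
\[
(I - \widetilde{U}\widetilde{U}^*)A = (I - \widetilde{U}\widetilde{U}^*)UH = (I - \widetilde{U}\widetilde{U}^*)(U - \widetilde{U})H;
\]
since $I - \widetilde{U}\widetilde{U}^*$ is an orthogonal projector, $\|(I - \widetilde{U}\widetilde{U}^*)(U - \widetilde{U})\| \le \|U - \widetilde{U}\|$, and~(\ref{fro1}) with $\sigma_1(H) = \sigma_1(A)$ completes the estimate.

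The one genuinely delicate point is verifying the positive definiteness of $\widetilde{H}$, since without it the polar decomposition of $\widetilde{A}$ would not have $\widetilde{U}$ as its orthogonal factor and Lemma~\ref{lemma:perturbation} could not be invoked with the stated $\sigma_p(\widetilde{A})$. The hypothesis $\|A - \widetilde{U}\| < 1$ is tailored precisely for this purpose; the remainder of the argument reduces to the key algebraic identity for $A - \widetilde{A}$ together with the Frobenius norm properties~(\ref{fro1})-(\ref{fro4}).
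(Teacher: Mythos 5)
Your proof is correct and follows essentially the same route as the paper's: same auxiliary matrix $\widetilde{A}=\widetilde{U}\,\mathrm{sym}(\widetilde{U}^*A)$, same appeal to Lemma~\ref{lemma:perturbation}, same treatment of the skew-Hermitian term via $\mathrm{skew}(U^*A)=0$. The one place you deviate is in bounding the range-discrepancy term: the paper writes $(I-\widetilde{U}\widetilde{U}^*)A=(UU^*-\widetilde{U}\widetilde{U}^*)A$ and expands to get $\|(I-\widetilde{U}\widetilde{U}^*)A\|\le 2\sigma_1(A)\|U-\widetilde{U}\|$, whereas your identity $(I-\widetilde{U}\widetilde{U}^*)A=(I-\widetilde{U}\widetilde{U}^*)(U-\widetilde{U})H$, using that $I-\widetilde{U}\widetilde{U}^*$ annihilates $\widetilde{U}$ and is an orthogonal projector, gives the sharper bound $\|(I-\widetilde{U}\widetilde{U}^*)A\|\le\sigma_1(A)\|U-\widetilde{U}\|$ and is arguably cleaner; either suffices to establish the lower bound as stated in~(\ref{ineq}).
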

\begin{proof}
Define $\widetilde{H} = \mathrm{sym}(\widetilde{U}^* A)$.  This matrix is positive-definite, since it is a small-normed perturbation of the identity matrix.  Indeed, the relation
\[
\widetilde{H}  - I = \mathrm{sym}\left(\widetilde{U}^* (A-\widetilde{U})\right)
\]
implies $\|\widetilde{H}-I\| \le \|\widetilde{U}^*(A-\widetilde{U})\|$.  Thus, using~(\ref{fro2}) and~(\ref{fro4}), the smallest eigenvalue of $\widetilde{H}$ satisfies 
\begin{align*}
\lambda_p(\widetilde{H}) 
&= 1 + \lambda_p(\widetilde{H}-I) \\
&\ge 1 - \| \widetilde{H} - I \| \\
&\ge 1 - \|\widetilde{U}^* (A-\widetilde{U})\| \\
&\ge 1 -  \|A-\widetilde{U}\| \\
&> 0.
\end{align*}
Now define $\widetilde{A} = \widetilde{U} \widetilde{H}$.  Observe that
\begin{align*}
A-\widetilde{A} 
&= (\widetilde{U} \widetilde{U}^* + I - \widetilde{U}\widetilde{U}^*) A - \widetilde{U} \mathrm{sym}(\widetilde{U}^* A) \\
&= \widetilde{U} \mathrm{skew}(\widetilde{U}^* A) + (I-\widetilde{U}\widetilde{U}^*) A,
\end{align*}
so
\begin{align*}
\|A-\widetilde{A}\| 
&\le \|\widetilde{U} \mathrm{skew}(\widetilde{U}^* A)\| + \|(I-\widetilde{U}\widetilde{U}^*) A\| \\
&= \|\mathrm{skew}(\widetilde{U}^* A)\| + \|(I-\widetilde{U}\widetilde{U}^*) A\|
\end{align*}
by~(\ref{fro3}).  The right-hand inequality in~(\ref{ineq}) then follows from Lemma~\ref{lemma:perturbation} upon noting that $\widetilde{A}$ and $\mathrm{sym}(\widetilde{U}^* A)$ have the same singular values.

To prove the left-hand inequality in~(\ref{ineq}), observe that since $H=U^* A$ is Hermitian,
\[
\mathrm{skew}(\widetilde{U}^* A) = \mathrm{skew}\left( (\widetilde{U}^*-U^*) A \right).
\]
Thus, using~(\ref{fro1}) and~(\ref{skewbound}),
\begin{align}
\|\mathrm{skew}(\widetilde{U}^* A)\| 
&\le \|(\widetilde{U}^*-U^*) A\| \nonumber \\
&\le \|\widetilde{U}^*-U^*\| \sigma_1(A) \nonumber \\
&= \|\widetilde{U}-U\| \sigma_1(A). \label{bound1}
\end{align}
On the other hand, since $UU^*A=UH=A$, we have
\begin{align*}
(I-\widetilde{U}\widetilde{U}^*)A 
&= (UU^*-\widetilde{U}\widetilde{U}^*)A \\
&= (U-\widetilde{U})U^*A + \widetilde{U}(U-\widetilde{U})^*A.
\end{align*}
Thus, using~(\ref{fro1}),~(\ref{fro3}), and the fact that $\sigma_1(U^*A) = \sigma_1(A)$, it follows that
\begin{align}
\|(I-\widetilde{U}\widetilde{U}^*)A \| 
&\le \|U-\widetilde{U}\| \sigma_1(U^*A) + \|(U-\widetilde{U})^* A\| \nonumber \\
&\le \|U-\widetilde{U}\| \sigma_1(A) + \|U-\widetilde{U}\|\sigma_1(A) \nonumber \\
&= 2\|U-\widetilde{U}\|\sigma_1(A).  \label{bound2}
\end{align}
Combining~(\ref{bound1}) and~(\ref{bound2}) proves the left-hand inequality in~(\ref{ineq}).
\end{proof}

The following less sharp version of Lemma~\ref{lemma:symmetry}, applicable in the square case ($m=p$), will be useful in the upcoming sections.

\begin{lemma} \label{lemma:symmetrysimple}
Let $A$, $U$, and $\widetilde{U}$ be as in Lemma~\ref{lemma:symmetry}.  If $A$ is square (i.e. $m=p$), then
\[
\|A\|^{-1} \|\mathrm{skew}(\widetilde{U}^* A) \| \le \|U-\widetilde{U}\| \le 2\|A^{-1}\| \|\mathrm{skew}(\widetilde{U}^* A) \|.
\]
\end{lemma}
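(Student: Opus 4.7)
The plan is to obtain Lemma~\ref{lemma:symmetrysimple} as an immediate specialization of Lemma~\ref{lemma:symmetry} to the square case. The decisive observation is that when $m=p$, the matrix $\widetilde{U} \in \mathbb{C}^{p \times p}$ has orthonormal columns and is therefore unitary, so $\widetilde{U}\widetilde{U}^* = I$. Consequently the ``range mismatch'' term $(I-\widetilde{U}\widetilde{U}^*)A$ vanishes identically, and both bounds in Lemma~\ref{lemma:symmetry} collapse to expressions involving only $\|\mathrm{skew}(\widetilde{U}^*A)\|$ together with singular values of $A$ and of $\mathrm{sym}(\widetilde{U}^*A)$.

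For the left-hand inequality, Lemma~\ref{lemma:symmetry} then reduces to $\|U-\widetilde{U}\| \ge \|\mathrm{skew}(\widetilde{U}^*A)\|/\sigma_1(A)$, and I would invoke the standard bound $\sigma_1(A) \le \|A\|$ (the largest singular value never exceeds the Frobenius norm) to conclude. For the right-hand inequality, the upper bound in Lemma~\ref{lemma:symmetry} reduces to $\|U-\widetilde{U}\| \le 2\|\mathrm{skew}(\widetilde{U}^*A)\|/(\sigma_p(A)+\sigma_p(\mathrm{sym}(\widetilde{U}^*A)))$. Since the proof of Lemma~\ref{lemma:symmetry} already established that $\mathrm{sym}(\widetilde{U}^*A)$ is positive-definite, I can drop the nonnegative term $\sigma_p(\mathrm{sym}(\widetilde{U}^*A))$ from the denominator (increasing the bound), and then use $1/\sigma_p(A) = \|A^{-1}\|_2 \le \|A^{-1}\|_F = \|A^{-1}\|$ to reach the stated bound.

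There is no substantive obstacle; the entire argument is a short sequence of specializations and two elementary norm comparisons. The ``less sharp'' character of Lemma~\ref{lemma:symmetrysimple} is exactly the price of the two slack inequalities $\sigma_1(A) \le \|A\|$ and $1/\sigma_p(A) \le \|A^{-1}\|$, which trade sharpness for the notational convenience of $\|A\|$ and $\|A^{-1}\|$ in the final expression.
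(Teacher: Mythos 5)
Your proof is correct and follows essentially the same route as the paper's: specialize Lemma~\ref{lemma:symmetry} to the square case where $\widetilde{U}\widetilde{U}^*=I$ kills the range-mismatch term, then weaken the singular-value quantities via $\sigma_1(A)\le\|A\|$ and $\sigma_p(A)^{-1}\le\|A^{-1}\|$. The only thing you add beyond the paper's one-line proof is the explicit remark that the $\sigma_p(\mathrm{sym}(\widetilde{U}^*A))$ term in the denominator may be dropped because it is nonnegative, which is indeed the step left implicit there.
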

\begin{proof}
Use~(\ref{ineq}) together with the fact that $\sigma_1(A) \le \|A\|$, $\sigma_p(A)^{-1} \le \|A^{-1}\|$, and $\widetilde{U}\widetilde{U}^*=I$ when $\widetilde{U}$ is square.
\end{proof}

\subsection{Exponentiation on the Unitary Group} \label{sec:polarexpproof}

We now prove Theorem~\ref{thm:polarexp}.  
Fix an integer $n \ge 0$ and consider a polynomial of the form
\[
q_n(z) = \sum_{k=0}^n a_k z^k,
\]
with $a_0=1$ and $a_k \in \mathbb{C}$, $k=1,2,\dots,n$.  We aim to find coefficients $a_k$ making $\mathcal{P}(q_n(t\Omega))-e^{t\Omega}$ small for any skew-Hermitian matrix $\Omega$.  Applying Lemma~\ref{lemma:symmetrysimple} with $A=q_n(t\Omega)$ and $\widetilde{U} = e^{t\Omega}$ gives
\begin{equation} \label{polarexpineq}
\|q_n(t\Omega)\|^{-1} \|\mathrm{skew}(e^{-t\Omega} q_n(t\Omega))\| \le \|\mathcal{P}(q_n(t\Omega)) - e^{t\Omega}\| \le 2\|q_n(t\Omega)^{-1}\| \|\mathrm{skew}(e^{-t\Omega} q_n(t\Omega))\|, 
\end{equation}
provided that $t$ is sufficiently small (small enough that $q_n(t\Omega)$ has full rank and $\|q_n(t\Omega)-e^{t\Omega}\|<1$).

This inequality is of great utility, since $\|q_n(t\Omega)\|^{-1}$ and $\|q_n(t\Omega)^{-1}\|$ are each $O(1)$ as $t \rightarrow 0$, and
\[
\mathrm{skew}(e^{-t\Omega}q_n(t\Omega)) = \frac{1}{2} \left( e^{-t\Omega} q_n(t\Omega) - q_n(-t\Omega) e^{t\Omega} \right)
\]
can be expanded in powers of $t$.
Namely,
\begin{align*}
e^{-t\Omega} q_n(t\Omega) - q_n(-t\Omega) e^{t\Omega} 
&= \sum_{j=0}^\infty (-1)^j \frac{(t\Omega)^j}{j!} \sum_{k=0}^n a_k (t\Omega)^k - \sum_{k=0}^n (-1)^k a_k (t\Omega)^k \sum_{j=0}^\infty \frac{(t\Omega)^j}{j!}  \\
&= \sum_{l=0}^\infty b_l t^l \Omega^l,
\end{align*}
where
\begin{align*}
b_l 
&= \sum_{k=0}^{\mathrm{min}(l,n)} \frac{1}{(l-k)!} \left( (-1)^{l-k} + (-1)^{k+1} \right) a_k \\
&= \begin{cases}
\sum_{k=0}^{\mathrm{min}(l,n)} \frac{2(-1)^{k+1}}{(l-k)!} a_k, &l \text{ odd}, \\
0, &l \text{ even}.
\end{cases}
\end{align*}
The quantity $e^{-t\Omega} q_n(t\Omega) - q_n(-t\Omega) e^{t\Omega}$ is thus of the highest order in $t$ when the $n$ coefficients $a_k$, $k=1,2,\dots,n$, are chosen to make $b_l=0$ for $l=1,3,5,\dots,2n-1$.  This is achieved when
\begin{equation} \label{ak}
a_k = \binom{n}{k}\frac{(2n-k)!}{(2n)!} 2^k, \quad k=1,2,\dots,n,
\end{equation}
as the following lemma shows.
\begin{lemma}
With $a_0=1$ and $a_k$ given by~(\ref{ak}) for $1 \le k \le n$, we have
\begin{equation} \label{linearsystem}
\sum_{k=0}^{\mathrm{min}(l,n)} \frac{2(-1)^{k+1}}{(l-k)!} a_k = 0, \quad l=1,3,5,\dots,2n-1.
\end{equation}
\end{lemma}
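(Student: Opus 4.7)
My plan is to reduce the identity (\ref{linearsystem}) to a single coefficient extraction, and then complete the proof by a short reciprocal-symmetry argument.

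The first step is a routine rewriting. Substituting $a_k = \binom{n}{k}\tfrac{(2n-k)!}{(2n)!}2^k$ and applying $\tfrac{1}{(l-k)!\,k!} = \tfrac{1}{l!}\binom{l}{k}$ together with $\tfrac{(2n-k)!}{(n-k)!} = n!\binom{2n-k}{n}$, the left-hand side of (\ref{linearsystem}) becomes
\[
-\,\frac{2\,(n!)^2}{l!\,(2n)!}\sum_{k=0}^{l}\binom{l}{k}(-2)^k\binom{2n-k}{n},
\]
where the upper limit $\min(l,n)$ has been enlarged to $l$ without changing the sum, since $\binom{2n-k}{n}=0$ for $k>n$. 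Hence (\ref{linearsystem}) is equivalent to the combinatorial assertion
\[
\sum_{k=0}^{l}\binom{l}{k}(-2)^k\binom{2n-k}{n} \;=\; 0 \qquad \text{for odd } l \in \{1,3,\dots,2n-1\}.
\]

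The second step is a standard generating-function manipulation: using $\binom{2n-k}{n} = [y^n](1+y)^{2n-k}$ and interchanging the finite sum with the coefficient extraction gives
\[
\sum_{k=0}^{l}\binom{l}{k}(-2)^k\binom{2n-k}{n}
\;=\; [y^n]\,(1+y)^{2n}\!\left(1-\tfrac{2}{1+y}\right)^{\!l}
\;=\; [y^n]\,(1+y)^{2n-l}(y-1)^l.
\]
So it suffices to show that $[y^n]P_l(y) = 0$ for odd $l\in\{1,3,\dots,2n-1\}$, where $P_l(y) := (1+y)^{2n-l}(y-1)^l$. A direct substitution verifies the reciprocal symmetry $y^{2n}P_l(1/y) = (-1)^l P_l(y)$; writing $P_l(y) = \sum_{k=0}^{2n} c_k y^k$ and matching coefficients, this reads $c_{2n-k} = (-1)^l c_k$. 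Taking $k=n$ forces $c_n = (-1)^l c_n$, so $c_n = 0$ whenever $l$ is odd, which is exactly the required vanishing.

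The main obstacle is the algebraic bookkeeping in the first step: one must verify carefully that the factorial rearrangements are correct and that the extension of the summation range is justified. Once this reduction is in place, however, the remainder is essentially a one-line symmetry argument, and the proof is complete.
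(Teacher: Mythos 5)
Your proof is correct and follows essentially the same path as the paper's: both reduce the identity, after substituting the explicit $a_k$, to showing that the coefficient of $z^n$ in the polynomial $(1+z)^{2n-l}(z-1)^l$ vanishes for odd $l$, and both use the reciprocal symmetry $z^{2n}P_l(1/z)=(-1)^lP_l(z)$ to force that coefficient to equal its own negation. The only differences are cosmetic: you phrase the extraction in $[y^n]$-coefficient notation and note that the summation limit may be extended from $\min(l,n)$ to $l$, whereas the paper expands $(1+z)^{2n-l}(z-1)^l$ via the substitution $(z-1)=-2+(1+z)$ and reads off the $z^n$ coefficient directly.
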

\begin{proof}
Substitution gives
\begin{align*}
\sum_{k=0}^{\mathrm{min}(l,n)} \frac{2(-1)^{k+1}}{(l-k)!} a_k 
&= \sum_{k=0}^{\mathrm{min}(l,n)} (-2)^{k+1} \binom{n}{k}\frac{(2n-k)!}{(2n)!(l-k)!} \\
&= \frac{-2(n!)^2}{l!(2n)!} \sum_{k=0}^{\mathrm{min}(l,n)} (-2)^k \binom{l}{k}\binom{2n-k}{n},
\end{align*}
so it suffices to show that
\[
\sum_{k=0}^{\mathrm{min}(l,n)} (-2)^k \binom{l}{k}\binom{2n-k}{n} = 0
\]
for each $l=1,3,5,\dots,2n-1$.  To prove this, consider the polynomial
\[
r(z) =  \sum_{k=0}^{2n} r_k z^k = (1+z)^{2n-l} (z-1)^{l}.
\]
The coefficient of $z^n$ in this polynomial is precisely
\begin{equation} \label{pn}
r_n = \sum_{k=0}^{\mathrm{min}(l,n)} (-2)^k \binom{l}{k}\binom{2n-k}{n}.
\end{equation}
Indeed,
\begin{align*}
r(z) &= (1+z)^{2n-l} (z-1)^{l} \\
&= (1+z)^{2n-2l} \left(-2(1+z) + (1+z)^2 \right)^l \\
&= (1+z)^{2n-2l} \sum_{k=0}^l \binom{l}{k} (-2)^k (1+z)^k (1+z)^{2(l-k)} \\ 
&= \sum_{k=0}^l \binom{l}{k} (-2)^k (1+z)^{2n-k} \\
&= \sum_{k=0}^l \binom{l}{k} (-2)^k \sum_{j=0}^{2n-k} \binom{2n-k}{j} z^j,
\end{align*}
and taking $j=n$ in the inner summation above gives~(\ref{pn}).  Now observe that $r(z)$ satisfies the symmetry
\[
r(z) = (-1)^l z^{2n} r(z^{-1}).
\]
From this it follows that the coefficients $r_k$ satisfy $r_k=(-1)^l r_{2n-k}$, $k=0,1,\dots,2n$.  In particular, $r_n = (-1)^l r_n$, so $r_n=0$ when $l$ is odd.
\end{proof}

This completes the proof of Theorem~\ref{thm:polarexp}, since by the inequality~(\ref{polarexpineq}), the unitary factor in the polar decomposition of the polynomial $\sum_{k=0}^n a_k t^k \Omega^k$ with coefficients given by~(\ref{ak}) delivers an approximation of $e^{t\Omega}$ with error of order $t^{2n+1}$.  

Uniqueness of the solution~(\ref{ak}) to~(\ref{linearsystem}) is a consequence of the following lemma, which proves that the linear system~(\ref{linearsystem}) is nonsingular.

\begin{lemma}
Fixing $a_0 = 1$, the linear system~(\ref{linearsystem}) in the $n$ unknowns $a_1,a_2,\dots,a_n$ is nonsingular.
\end{lemma}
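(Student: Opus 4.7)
The plan is to reinterpret~(\ref{linearsystem}) as a condition on the polynomial $A(z) = \sum_{k=0}^n a_k z^k$ and then argue by induction on $n$. As the derivation preceding~(\ref{ak}) shows, the vanishing of $b_l$ for every odd $l \le 2n-1$ is equivalent to
\[
e^{-z}A(z) - e^z A(-z) = O(z^{2n+1}),
\]
with the even-$l$ conditions being automatic because this difference is odd in $z$. Nonsingularity of the system in $a_1,\ldots,a_n$ therefore amounts to the statement that every such $A$ with $A(0)=0$ vanishes identically.

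First I would factor $A(z) = zA_1(z)$ with $\deg A_1 \le n-1$; substitution and division by $z$ yields $e^{-z}A_1(z) + e^z A_1(-z) = O(z^{2n})$. This quantity is even in $z$, so its $z^0$-coefficient $2A_1(0)$ must vanish, allowing a second factorization $A_1(z) = zA_2(z)$ with $\deg A_2 \le n-2$. Substituting and dividing once more yields $e^{-z}A_2(z) - e^z A_2(-z) = O(z^{2n-1})$, which is precisely the original condition with $n$ replaced by $n-1$.

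The induction is then straightforward. The base case $n=1$ is the $1\times 1$ system with matrix $[2]$, trivially nonsingular. For the step, the inductive hypothesis combined with the previous lemma (which exhibits $\Theta_{n-1}$ as a solution of the $(n-1)$-st instance of~(\ref{linearsystem})) implies that the space of polynomials $B$ of degree at most $n-1$ satisfying $e^{-z}B(z) - e^z B(-z) = O(z^{2n-1})$ is exactly the one-dimensional span of $\Theta_{n-1}$. Applied to $A_2$ this gives $A_2 = c\,\Theta_{n-1}$ for some scalar $c$; but $\deg A_2 \le n-2$ while $\Theta_{n-1}$ has exact degree $n-1$ (its leading coefficient $2^{n-1}(n-1)!/(2n-2)!$ being nonzero), so $c = 0$, and therefore $A = z^2 A_2 \equiv 0$.

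The main obstacle is just the bookkeeping of the two reductions, namely verifying that factoring $z$ out of $A$ twice simultaneously lowers the polynomial degree by $2$ and the error order by $2$, so that the condition on $A_2$ is precisely the $(n-1)$-st instance of the same problem. Once that is in hand, the inductive step reduces to a one-line degree comparison: $\deg A_2 \le n-2 < n-1 = \deg \Theta_{n-1}$ forces $c=0$.
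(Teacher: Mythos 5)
Your proof is correct, and it takes a genuinely different route from the paper's. The paper attacks the coefficient matrix $M$ head-on: it runs an induction to produce an explicit closed-form for $\det M$, namely $\frac{(-1)^{\floor{n/2}}(-2)^n}{\prod_{k=1}^{n-1}(2k-1)!!}$, and observes this is nonzero. You instead stay at the level of the generating identity $e^{-z}A(z)-e^{z}A(-z)=O(z^{2n+1})$ and show that the homogeneous problem (constant term $a_0=0$) has only the trivial solution by factoring $A=z^2 A_2$ (the intermediate step $A_1(0)=0$ coming from the evenness of $e^{-z}A_1(z)+e^{z}A_1(-z)$), which drops both the degree bound and the required order of vanishing by two and lands you precisely in the $(n-1)$-st instance, but with the tighter restriction $\deg A_2\le n-2$. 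You then invoke the inductive hypothesis together with the preceding lemma to pin the solution space of the $(n-1)$-st problem down to $\operatorname{span}\{\Theta_{n-1}\}$, and the degree comparison $\deg A_2\le n-2 < n-1=\deg\Theta_{n-1}$ kills the coefficient. The trade-off is clear: the paper's route is self-contained and gives an explicit determinant but requires some bookkeeping to verify the inductive determinant formula; yours avoids determinant manipulation entirely and is more structural, at the cost of leaning on the previous lemma (existence of the Bessel solution $\Theta_{n-1}$ with unit constant term and full degree $n-1$) to turn the inductive hypothesis from ``trivial kernel'' into ``one-dimensional solution space spanned by a polynomial of exact degree $n-1$.'' Both are valid; the degree-drop reduction is arguably the cleaner conceptual explanation of why the system cannot degenerate.
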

\begin{proof}
Upon rearrangement,~(\ref{linearsystem}) reads
\[
M x = y,
\]
where $x,y \in \mathbb{C}^n$ and $M \in \mathbb{C}^{n \times n}$ have entries given by
\begin{align*}
x_i &= a_i, \quad i=1,2,\dots,n, \\
y_i &= \frac{2}{(2i-1)!}, \quad i=1,2,\dots,n, \\
M_{ij} &= 
\begin{cases}
\frac{2(-1)^{j+1}}{(2i-1-j)!}, &\mbox{ if } j \le \min(2i-1,n), \\
0, &\mbox{ otherwise. }
\end{cases}
\end{align*}
An inductive argument shows that the determinant of $M$ is equal to
\[
\det M = \frac{(-1)^{\floor{\frac{n}{2}}}(-2)^n}{\prod_{k=1}^{n-1} (2k-1)!!},
\]
where $l!! = \prod_{j=0}^{\ceil{l/2}-1} (l-2j)$ denotes the double factorial.  In particular, $\det M \neq 0$, showing that $M$ is nonsingular.
\end{proof}

\subsection{Connections with Pad\'{e} Approximation} \label{sec:pade}

We now present an alternative proof of Theorem~\ref{thm:polarexp} that relies not on Lemma~\ref{lemma:symmetry}, but rather on a connection between $\mathcal{P}(\Theta_n(t\Omega))$ and the diagonal Pad\'{e} approximant of $e^{2t\Omega}$.  

Our alternative proof will make use of the fact that if $A \in \mathbb{C}^{m \times p}$ ($m \ge p$) has full rank, then
\begin{equation} \label{polarformula}
\mathcal{P}(A) = A (A^*A)^{-1/2},
\end{equation}
where $C^{-1/2}$ denotes inverse of the principal square root of a square matrix $C$ with no nonpositive real eigenvalues~\cite[Theorem 8.1]{higham2008functions}.

It will also make use of the following facts: If $f$ and $g$ are two scalar-valued functions defined on the spectrum of a square matrix $A$, then $f(A)$ and $g(A)$ are well-defined~\cite[Section 1.2]{higham2008functions}, $f(A)$ commutes with $g(A)$~\cite[Theorem 1.13(e)]{higham2008functions}, and the spectrum of $f(A)$ is the image of the spectrum of $A$ under $f$~\cite[Theorem 1.13(d)]{higham2008functions}.

\begin{lemma}
Let $q_n(z)$ be a polynomial of degree $n \ge 0$ with $q_n(0) \neq 0$, and let $\Omega \in \mathbb{C}^{m \times m}$ be skew-Hermitian.  For each $t$ sufficiently small (small enough so that $q_n(t\Omega)$ is nonsingular), we have
\[
\mathcal{P}(q_n(t\Omega))^2 = q_n(t\Omega) q_n(-t\Omega)^{-1}.
\]
Furthermore, $\mathcal{P}(q_n(t\Omega))$ commutes with $e^{t\Omega}$.
\end{lemma}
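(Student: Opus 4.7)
The plan is to exploit the explicit formula $\mathcal{P}(A) = A(A^*A)^{-1/2}$ recorded in~(\ref{polarformula}) and to show that, because $\Omega$ is skew-Hermitian, both $A := q_n(t\Omega)$ and $A^*A$ are functions of the single matrix $\Omega$, so that the functional calculus guarantees the commutation we need. (The Bessel polynomials used later have real coefficients, and we implicitly take $q_n$ to have real coefficients so that $q_n(t\Omega)^* = q_n(-t\Omega)$; this is the only place the coefficients enter the argument.)

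First I would set $A = q_n(t\Omega)$ and observe that $A^* = q_n(-t\Omega)$ since $(t\Omega)^* = -t\Omega$ and $q_n$ has real coefficients. Both $A$ and $A^*$ are polynomials in $\Omega$, so they commute, and hence so does their product $A^*A = q_n(-t\Omega)q_n(t\Omega)$. For $t$ small enough that $q_n(t\Omega)$ is nonsingular, $A^*A$ is Hermitian positive-definite and therefore has no nonpositive real eigenvalues, so the principal inverse square root $(A^*A)^{-1/2}$ is well-defined. By the facts quoted just before the lemma (that $f(M)$ and $g(M)$ commute when both are defined on the spectrum of $M$), the matrix $(A^*A)^{-1/2}$ is itself a function of $\Omega$, and in particular commutes with $A$.

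The first identity then falls out by direct computation: using~(\ref{polarformula}) and the commutation just established,
\[
\mathcal{P}(A)^2 = A(A^*A)^{-1/2} A (A^*A)^{-1/2} = A^2 (A^*A)^{-1} = A \cdot A A^{-1} (A^*)^{-1} = A (A^*)^{-1} = q_n(t\Omega) q_n(-t\Omega)^{-1},
\]
as required. For the second claim, note that $\mathcal{P}(A) = A (A^*A)^{-1/2}$ is a product of two functions of $\Omega$, hence is itself a function of $\Omega$; since $e^{t\Omega}$ is also a function of $\Omega$, the two commute by the same functional-calculus fact quoted above.

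The only subtlety worth flagging is the need to verify that $(A^*A)^{-1/2}$ is defined and commutes with $A$, and this is where the skew-Hermiticity of $\Omega$ is used twice: once to identify $A^*$ as $q_n(-t\Omega)$ (so that $A^*A$ is a polynomial in $\Omega$), and once to know that $A^*A$ is Hermitian positive-definite so that the principal branch of the square root is available. Everything else is bookkeeping with the polar formula.
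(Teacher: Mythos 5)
Your proposal is correct and follows essentially the same route as the paper: both arguments use the formula $\mathcal{P}(A)=A(A^*A)^{-1/2}$, observe that $q_n(t\Omega)^* = q_n(-t\Omega)$ makes $\mathcal{P}(q_n(t\Omega))$ a function of $t\Omega$, invoke the functional-calculus commutation facts quoted just before the lemma, and then compute $\mathcal{P}(A)^2 = A^2(A^*A)^{-1} = q_n(t\Omega)q_n(-t\Omega)^{-1}$. The one small addition you make -- explicitly flagging that $q_n$ must have real coefficients for $q_n(t\Omega)^*=q_n(-t\Omega)$ to hold -- is a hypothesis the paper leaves tacit (it is satisfied by the Bessel polynomials $\Theta_n$), and it is good that you noted it.
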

\begin{proof}
Since $\Omega$ is skew-Hermitian, $q_n(t\Omega)^* = q_n(-t\Omega)$.  Hence, by~(\ref{polarformula}), 
\begin{align*}
\mathcal{P}(q_n(t\Omega)) &=  q_n(t\Omega) \left[q_n(-t\Omega)q_n(t\Omega) \right]^{-1/2}.
\end{align*}
This shows that $\mathcal{P}(q_n(t\Omega)) = f(t\Omega)$, where $f(z)=q_n(z)(q_n(-z)q_n(z))^{-1/2}$.  If $q_n(t\Omega)$ is nonsingular, then $f$ is defined on the spectrum of $t\Omega$. Indeed, if $\lambda$ is an eigenvalue of $t\Omega$, then $q_n(\lambda)$, being an eigenvalue of $q_n(t\Omega)$, is nonzero, and $q_n(-\lambda)$, being an eigenvalue of $q_n(-t\Omega)=q_n(t\Omega)^*$, is nonzero.  Thus, $\mathcal{P}(q_n(t\Omega))$ commutes with any function of $t\Omega$ defined on the spectrum of $t\Omega$, including $e^{t\Omega}$.  By similar reasoning, $\left[q_n(-t\Omega)q_n(t\Omega)\right]^{-1/2}$ commutes with $q_n(t\Omega)$, so
\begin{align*}
\mathcal{P}(q_n(t\Omega))^2 
&= q_n(t\Omega) \left[q_n(-t\Omega)q_n(t\Omega) \right]^{-1/2} q_n(t\Omega) \left[q_n(-t\Omega)q_n(t\Omega)\right]^{-1/2} \\
&= q_n(t\Omega)^2 \left[q_n(-t\Omega)q_n(t\Omega)\right]^{-1} \\
&= q_n(t\Omega) q_n(-t\Omega)^{-1}.
\end{align*}
\end{proof}

The preceding lemma implies that if $t$ is sufficiently small, then
\[
(\mathcal{P}(q_n(t\Omega)) + e^{t\Omega}) (\mathcal{P}(q_n(t\Omega))-e^{t\Omega}) =  q_n(t\Omega)q_n(-t\Omega)^{-1} - e^{2t\Omega}.
\]
Using this identity, it is not hard to see that the polynomial $q_n$ for which $\mathcal{P}(q_n(t\Omega))-e^{t\Omega}$ is of the highest order in $t$ is precisely that for which $q_n(t\Omega) q_n(-t\Omega)^{-1} - e^{2t\Omega}$ is of the highest order in $t$.  That polynomial is none other than the numerator in the diagonal Pad\'{e} approximant of $e^{2t\Omega}$, which is precisely $\Theta_n(t\Omega)$~\cite[p. 97]{iserles2000lie}.

\subsection{Exponentiation on the Grassmannian} \label{sec:grassmanexpproof}

We now prove Theorem~\ref{thm:grassmanexp}.  The proof will consist of two parts.  First, we prove the identity~(\ref{PTheta}) by exploiting the block structure of the matrix $Z= \begin{pmatrix} 0 & -K^* \\ K & 0 \end{pmatrix}$.  Then, we insert the right-hand side of~(\ref{PTheta}) into~(\ref{grassmanexpnaive}) and expand the result to obtain Theorem~\ref{thm:grassmanexp}.

Throughout this subsection, we make use of the identities
\begin{equation} \label{Zpowereven}
Z^{2j} = \begin{pmatrix} (-K^*K)^j & 0 \\ 0 & (-KK^*)^j \end{pmatrix}
\end{equation}
and
\begin{equation} \label{Zpowerodd}
Z^{2j+1} = \begin{pmatrix} 0 & -K^*(-KK^*)^j \\ K(-K^*K)^j & 0 \end{pmatrix},
\end{equation}
which hold for for every nonnegative integer $j$.

\begin{lemma} \label{lemma:blockdiag}
Let $r(z) = c_0 + c_1 z + c_2 z^2 + \dots + c_n z^n$ be a polynomial, let $K \in \mathbb{C}^{(m-p) \times p}$, and let $Z = \begin{pmatrix} 0 & -K^* \\ K & 0 \end{pmatrix}$.  Then
\[
r(Z)^* r(Z) = \begin{pmatrix} B^*B & 0 \\ 0 & C^*C \end{pmatrix},
\]
where $B = r(Z)\begin{pmatrix} I \\ 0 \end{pmatrix}$ and $C = r(Z)\begin{pmatrix} 0 \\ I \end{pmatrix}$.
\end{lemma}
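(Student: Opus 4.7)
The plan is to note that by definition $r(Z) = \begin{pmatrix} B & C \end{pmatrix}$, hence
\[
r(Z)^* r(Z) = \begin{pmatrix} B^* \\ C^* \end{pmatrix}\begin{pmatrix} B & C \end{pmatrix} = \begin{pmatrix} B^*B & B^*C \\ C^*B & C^*C \end{pmatrix},
\]
so the entire claim reduces to establishing $B^*C = 0$ (the lower-left block then vanishes by taking adjoints). To get a handle on $B$ and $C$, I would split $r$ into its even and odd parts by writing $r(z) = \alpha(z^2) + z\,\beta(z^2)$, where $\alpha(x) = \sum_j c_{2j} x^j$ and $\beta(x) = \sum_j c_{2j+1} x^j$. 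Combining this decomposition with~\eqref{Zpowereven}--\eqref{Zpowerodd} and reading off the first and second block-columns of $r(Z)$ yields
\[
B = \begin{pmatrix} \alpha(-K^*K) \\ K\,\beta(-K^*K) \end{pmatrix}, \qquad
C = \begin{pmatrix} -K^*\,\beta(-KK^*) \\ \alpha(-KK^*) \end{pmatrix}.
\]

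The heart of the proof is the explicit computation of $B^*C$. Taking adjoints (and using that $-K^*K$ and $-KK^*$ are Hermitian, so that the real-coefficient polynomials $\alpha$ and $\beta$ are self-adjoint when evaluated there) gives
\[
B^*C = -\alpha(-K^*K)\,K^*\,\beta(-KK^*) + \beta(-K^*K)\,K^*\,\alpha(-KK^*).
\]
The key algebraic tool is the commutation identity $K^* f(KK^*) = f(K^*K)\,K^*$, which holds for any polynomial $f$ because $K^*(KK^*)^j = (K^*K)^j K^*$ for each $j \ge 0$. Applying it to both summands pushes the polynomial factors to the left of $K^*$, producing
\[
B^*C = \bigl[-\alpha(-K^*K)\,\beta(-K^*K) + \beta(-K^*K)\,\alpha(-K^*K)\bigr]\,K^* = 0,
\]
since $\alpha(-K^*K)$ and $\beta(-K^*K)$ are polynomials in the same matrix and therefore commute.

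The main obstacle is keeping the bookkeeping of $K$'s, $K^*$'s, and polynomial factors straight during the $B^*C$ computation; the rest is routine. A more conceptual alternative, should the direct calculation prove cumbersome, is to introduce $J = \begin{pmatrix} I & 0 \\ 0 & -I \end{pmatrix}$ and observe by block multiplication that $JZ = -ZJ$, which implies $J\,r(Z)\,J = r(-Z)$ for every polynomial $r$; combined with $r(Z)^* = r(-Z)$, this yields $J\,r(Z)^*r(Z)\,J = r(Z)\,r(-Z) = r(-Z)\,r(Z) = r(Z)^*r(Z)$, forcing $r(Z)^*r(Z)$ to commute with $J$ and therefore to be block diagonal, after which one identifies the diagonal blocks as $B^*B$ and $C^*C$ directly.
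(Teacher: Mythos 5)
Your proof is correct, and both of your arguments take a genuinely different route from the paper's. The paper never examines the blocks of $r(Z)$ individually: it observes that skew-Hermiticity of $Z$ gives $r(Z)^*r(Z) = r(-Z)\,r(Z)$, takes the Hermitian part of both sides (legitimate since the left side is Hermitian), and invokes $\mathrm{sym}(Z^j)=0$ for odd $j$ to conclude that $r(Z)^*r(Z)$ is a linear combination of even powers of $Z$, hence block diagonal by~(\ref{Zpowereven}). Your main argument is more explicit: you read off $B$ and $C$ from the even--odd split of $r$ and annihilate the cross term $B^*C$ via the intertwining relation $K^*f(KK^*) = f(K^*K)\,K^*$ together with the commutativity of $\alpha(-K^*K)$ and $\beta(-K^*K)$; this is elementary and self-contained but requires more bookkeeping. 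Your alternative via the grading operator $J$ is the tidiest of the three and is essentially the paper's parity observation repackaged, since commuting with $J$ is precisely the condition of being an even polynomial in $Z$, but it avoids any appeal to the Hermitian part. One small point common to all three proofs, yours and the paper's alike: they implicitly require the coefficients $c_k$ to be real, so that $r(Z)^*=r(-Z)$ and so that $\alpha,\beta$ are self-adjoint after evaluation at Hermitian arguments. The lemma as literally stated is false for complex coefficients (e.g.\ $r(z)=1+iz$ gives $r(Z)^*r(Z)=I+2iZ-Z^2$, whose off-diagonal blocks do not vanish), but this costs nothing because the only polynomial the lemma is applied to is the real-coefficient $\Theta_n$.
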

\begin{proof}
The diagonal blocks of $r(Z)^* r(Z)$ are automatically given by $B^*B$ and $C^*C$, so it suffices to show that the off-diagonal blocks of $r(Z)^* r(Z)$ vanish.  To this end, observe that the skew-Hermiticity of $Z$ implies $r(Z)^* r(Z) = r(-Z) r(Z)$.  But $r(Z)^* r(Z)$ is Hermitian, so taking the Hermitian part of both sides gives $r(Z)^* r(Z) = \mathrm{sym}\left( r(-Z) r(Z) \right)$.  Since $\mathrm{sym}(Z^j)=0$ for odd $j$, it follows that $r(Z)^*r(Z)$ is a linear combination of even powers of $Z$, all of which are block diagonal by~(\ref{Zpowereven}).
\end{proof}

\begin{lemma} \label{lemma:polarsquare2rect}
Let $r(z) = c_0 + c_1 z + c_2 z^2 + \dots + c_n z^n$ be a polynomial, let $K \in \mathbb{C}^{(m-p) \times p}$, and define $Z = \begin{pmatrix} 0 & -K^* \\ K & 0 \end{pmatrix}$.  If $r(Z)$ has full rank, then
\[
\mathcal{P}(r(Z)) \begin{pmatrix} I \\ 0 \end{pmatrix} = \mathcal{P}\left( r(Z) \begin{pmatrix} I \\ 0 \end{pmatrix} \right).
\]
\end{lemma}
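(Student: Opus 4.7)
The plan is to combine the explicit formula $\mathcal{P}(A) = A(A^*A)^{-1/2}$ from~(\ref{polarformula}) with the block-diagonal structure exposed by Lemma~\ref{lemma:blockdiag}. Applied to $A = r(Z)$, this gives
\[
\mathcal{P}(r(Z)) = r(Z)\bigl(r(Z)^* r(Z)\bigr)^{-1/2},
\]
so the first step will be to compute $\bigl(r(Z)^*r(Z)\bigr)^{-1/2}$ explicitly. By Lemma~\ref{lemma:blockdiag}, $r(Z)^* r(Z) = \mathrm{diag}(B^*B, C^*C)$ with $B = r(Z)\begin{pmatrix} I \\ 0 \end{pmatrix}$ and $C = r(Z)\begin{pmatrix} 0 \\ I \end{pmatrix}$; since $r(Z)$ is full rank both diagonal blocks are Hermitian positive-definite, and the principal inverse square root of a block-diagonal Hermitian positive-definite matrix is the block-diagonal matrix of inverse square roots. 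Hence
\[
\bigl(r(Z)^*r(Z)\bigr)^{-1/2} = \begin{pmatrix} (B^*B)^{-1/2} & 0 \\ 0 & (C^*C)^{-1/2} \end{pmatrix}.
\]

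The second step is a one-line algebraic manipulation: right-multiply the previous identity by $\begin{pmatrix} I \\ 0 \end{pmatrix}$ so that only the top-left block survives, and recognize that $r(Z)\begin{pmatrix} I \\ 0 \end{pmatrix}(B^*B)^{-1/2} = B(B^*B)^{-1/2}$. Then invoke~(\ref{polarformula}) a second time to rewrite $B(B^*B)^{-1/2}$ as $\mathcal{P}(B)$, which is exactly the right-hand side of the claimed identity. Chaining the two steps yields
\[
\mathcal{P}(r(Z))\begin{pmatrix} I \\ 0 \end{pmatrix} = r(Z)\begin{pmatrix} I \\ 0 \end{pmatrix}(B^*B)^{-1/2} = \mathcal{P}\!\left(r(Z)\begin{pmatrix} I \\ 0 \end{pmatrix}\right).
\]

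There is not much of an obstacle here, and the whole argument fits comfortably in a few lines. The only subtle point to verify in passing is that $B$ itself has full rank — otherwise $\mathcal{P}(B)$ would not be defined — but this is immediate because $B^*B$ appears as a diagonal block of the positive-definite matrix $r(Z)^*r(Z)$ and is therefore itself positive-definite. All the real work has already been done in Lemma~\ref{lemma:blockdiag}, which isolated the block-diagonality of $r(Z)^* r(Z)$; the present lemma is essentially a corollary that converts that block structure, via the closed-form polar decomposition, into a commutation between $\mathcal{P}$ and the projection onto the first $p$ columns.
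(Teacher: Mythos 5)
Your proof is correct and follows essentially the same route as the paper's: both invoke the closed-form polar decomposition $\mathcal{P}(A)=A(A^*A)^{-1/2}$ together with the block-diagonal form of $r(Z)^*r(Z)$ from Lemma~\ref{lemma:blockdiag}, and observe that restricting to the first $p$ columns isolates the $(B^*B)^{-1/2}$ block on both sides. Your extra remark that $B$ is full rank because $B^*B$ is a positive-definite diagonal block is a reasonable sanity check the paper leaves implicit.
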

\begin{proof}
In the notation of Lemma~\ref{lemma:blockdiag}, 
\begin{align*}
\mathcal{P}(r(Z)) 
&= r(Z) \left( r(Z)^* r(Z) \right)^{-1/2} \\
&= r(Z) \begin{pmatrix} (B^*B)^{-1/2} & 0 \\ 0 & (C^*C)^{-1/2} \end{pmatrix},
\end{align*}
so
\[
\mathcal{P}(r(Z)) \begin{pmatrix} I \\ 0 \end{pmatrix} = r(Z) \begin{pmatrix} (B^*B)^{-1/2} \\ 0 \end{pmatrix}.
\]
On the other hand,
\begin{align*}
\mathcal{P} \left( r(Z) \begin{pmatrix} I \\ 0 \end{pmatrix} \right) 
&= r(Z) \begin{pmatrix} I \\ 0 \end{pmatrix}  \left( \begin{pmatrix} I & 0 \end{pmatrix} r(Z)^*r(Z) \begin{pmatrix} I \\ 0 \end{pmatrix} \right)^{-1/2} \\
&= r(Z) \begin{pmatrix} I \\ 0 \end{pmatrix} (B^*B)^{-1/2} \\
&= r(Z) \begin{pmatrix} (B^*B)^{-1/2} \\ 0 \end{pmatrix}
\end{align*}
as well.
\end{proof}

The preceding lemma establishes the identity~(\ref{PTheta}).  We now study the quantity $\Theta_n(tZ) \begin{pmatrix} I \\ 0 \end{pmatrix}$ in more detail.

\begin{lemma} \label{lemma:evenodd}
Let $\alpha_n(z)$ and $\beta_n(z)$ be as in Theorem~\ref{thm:grassmanexp}, let $K \in \mathbb{C}^{(m-p) \times p}$, and let $Z = \begin{pmatrix} 0 & -K^* \\ K & 0 \end{pmatrix}$.  Then
\[
\Theta_n(tZ)\begin{pmatrix} I \\ 0 \end{pmatrix} = \begin{pmatrix} \alpha_n(t^2K^* K) \\ tK \beta_n(t^2K^* K) \end{pmatrix}.
\]
\end{lemma}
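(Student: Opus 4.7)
The plan is to carry out a direct computation: expand $\Theta_n(tZ) = \sum_{k=0}^n a_k t^k Z^k$, apply the column selector $\begin{pmatrix} I \\ 0 \end{pmatrix}$ on the right, and reduce each term using the block structure of powers of $Z$.

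First, I would split the sum according to the parity of $k$. For $k = 2j$, identity~(\ref{Zpowereven}) gives
\[
Z^{2j} \begin{pmatrix} I \\ 0 \end{pmatrix} = \begin{pmatrix} (-K^*K)^j \\ 0 \end{pmatrix},
\]
while for $k = 2j+1$, identity~(\ref{Zpowerodd}) gives
\[
Z^{2j+1} \begin{pmatrix} I \\ 0 \end{pmatrix} = \begin{pmatrix} 0 \\ K(-K^*K)^j \end{pmatrix}.
\]
Summing the even-indexed contributions produces the top block
\[
\sum_{j=0}^{\floor{n/2}} a_{2j} t^{2j} (-K^*K)^j = \sum_{j=0}^{\floor{n/2}} a_{2j} (-t^2 K^*K)^j = \alpha_n(t^2 K^*K),
\]
where I have used $t^{2j}(-K^*K)^j = (-1)^j t^{2j} (K^*K)^j = (-t^2 K^*K)^j$ (viewing $-z$ substituted into the polynomial $\alpha_n$ in the scalar-argument matrix $t^2 K^*K$).

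Second, the odd-indexed contributions assemble into the bottom block by factoring out $tK$:
\[
\sum_{j=0}^{\floor{(n-1)/2}} a_{2j+1} t^{2j+1} K(-K^*K)^j = tK \sum_{j=0}^{\floor{(n-1)/2}} a_{2j+1} (-t^2 K^*K)^j = tK \beta_n(t^2 K^*K).
\]
Assembling the two blocks yields the claimed identity. There is no real obstacle here; the only mild subtlety is keeping the sign bookkeeping straight when converting $t^{2j}(-K^*K)^j$ into $(-t^2 K^*K)^j$ so that the definitions of $\alpha_n$ and $\beta_n$ (which contain explicit $(-z)^j$ factors) match the coefficients $a_{2j}$ and $a_{2j+1}$ exactly.
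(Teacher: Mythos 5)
Your proof is correct and takes essentially the same approach as the paper: split $\Theta_n(tZ)$ by parity of the power, use the block formulas for $Z^{2j}$ and $Z^{2j+1}$, and reassemble into $\alpha_n$ and $\beta_n$. (Incidentally, you correct a small typo in the paper's own display, which writes $\floor{m/2}$ and $\floor{(m-1)/2}$ where $\floor{n/2}$ and $\floor{(n-1)/2}$ are meant.)
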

\begin{proof}
Using~(\ref{Zpowereven}-\ref{Zpowerodd}), we have
\begin{align*}
\Theta_n(tZ) \begin{pmatrix} I \\ 0 \end{pmatrix} 
&= \left(  \sum_{j=0}^{\floor{m/2}} a_{2j} (tZ)^{2j} + \sum_{j=0}^{\floor{(m-1)/2}} a_{2j+1} (tZ)^{2j+1} \right)\begin{pmatrix} I \\ 0 \end{pmatrix} \\
&= \sum_{j=0}^{\floor{m/2}} a_{2j} \begin{pmatrix} (-t^2 K^* K)^j \\ 0 \end{pmatrix} + \sum_{j=0}^{\floor{(m-1)/2}} a_{2j+1} \begin{pmatrix} 0 \\ tK(-t^2 K^*K)^j \end{pmatrix}  \\
&= \begin{pmatrix} \alpha_n(t^2K^*K) \\ 0 \end{pmatrix} + \begin{pmatrix} 0 \\ tK\beta_n(t^2K^*K) \end{pmatrix} \\
&= \begin{pmatrix} \alpha_n(t^2K^* K) \\ tK \beta_n(t^2K^* K) \end{pmatrix}.
\end{align*}
\end{proof}
We are now in a position to prove Theorem~\ref{thm:grassmanexp} by substituting the preceding results into~(\ref{grassmanexpnaive}).  Combining Lemmas~\ref{lemma:polarsquare2rect} and~\ref{lemma:evenodd}, we have
\begin{align*}
\begin{pmatrix} Y & Y_\perp \end{pmatrix} \mathcal{P}(\Theta_n(tZ))\begin{pmatrix} I \\ 0 \end{pmatrix} 
&= \begin{pmatrix} Y & Y_\perp \end{pmatrix} \mathcal{P} \begin{pmatrix}  \alpha_n(t^2K^* K) \\ tK \beta_n(t^2K^* K) \end{pmatrix}\\
&= \mathcal{P} \left( \begin{pmatrix} Y & Y_\perp \end{pmatrix}  \begin{pmatrix}  \alpha_n(t^2K^* K) \\ tK \beta_n(t^2K^* K) \end{pmatrix} \right) \\
&= \mathcal{P} \left( Y \alpha_n(t^2K^* K) + tY_\perp K \beta_n(t^2K^* K) \right) \\
&= \mathcal{P} \left( Y \alpha_n(t^2H^* H) + tH \beta_n(t^2H^* H) \right),
\end{align*}
where the second line follows from~(\ref{polarinvariance}), 
and the last line follows from the fact that $H=Y_\perp K$, and $Y_\perp$ has orthonormal columns.  
This, together with~(\ref{grassmanexpnaive}), completes the proof of Theorem~\ref{thm:grassmanexp}.

\subsection{Exponentiation on the Stiefel Manifold} \label{sec:stiefelexpproof}

We now turn to the proof of Theorem~\ref{thm:stiefelexp}.  Let $q(x,y)$ and $r(x,y)$ be polynomials in non-commutating variables $x$ and $y$, and define
\[
A = Yq\left(t^2H^*H, tY^*H\right) + tHr\left(t^2H^*H,tY^*H\right).
\]
For the moment we assume only that $q(0,0)=1$, but later we will make the additional assumptions~(\ref{polycond1}-\ref{polycond3}) (the first of which implies $q(0,0)=1$).
Using the identities $Y =  \begin{pmatrix} Y & Y_\perp \end{pmatrix} \begin{pmatrix} I \\ 0 \end{pmatrix}$, $H = Y\Omega + Y_\perp K = \begin{pmatrix} Y & Y_\perp \end{pmatrix}  \begin{pmatrix} \Omega \\ K \end{pmatrix}$, $H^*H = K^*K-\Omega^2$, and $Y^*H = \Omega$, we can write
\begin{align*}
A
&= \begin{pmatrix} Y & Y_\perp \end{pmatrix} \left( \begin{pmatrix} I \\ 0 \end{pmatrix} q\left(t^2(K^*K-\Omega^2),t\Omega\right) + \begin{pmatrix} t\Omega \\ tK \end{pmatrix} r\left(t^2(K^*K-\Omega^2,t\Omega\right) \right) \\
&= \begin{pmatrix} Y & Y_\perp \end{pmatrix} \begin{pmatrix} q+t\Omega r \\ tKr \end{pmatrix},
\end{align*}
where we have suppressed the arguments to $q$ and $r$ in the last line to reduce clutter.

Now let $Z = \begin{pmatrix} \Omega & -K^* \\ K & 0 \end{pmatrix}$ and define
\begin{align*}
\widetilde{U} &= \begin{pmatrix} Y & Y_\perp \end{pmatrix} e^{tZ} \begin{pmatrix} I \\ 0 \end{pmatrix} = \mathrm{Exp}^{St}_Y(tH).
\end{align*}
We aim to bound 
\[
\|\mathcal{P}(A)-\widetilde{U}\| = \|\mathcal{P}\left(Yq\left(t^2H^*H, tY^*H\right) + tHr\left(t^2H^*H,tY^*H\right)\right) - \mathrm{Exp}^{St}_Y(tH)\|
\]
using Lemma~\ref{lemma:symmetry}.  Since $A\big|_{t=0}=\begin{pmatrix} Y & Y_\perp \end{pmatrix}$ is unitary, it follows that $\sigma_i(A)=O(1)$ as $t \rightarrow 0$ for each $i=1,2,\dots,p$.  Thus, it is enough to bound $\|\mathrm{skew}(\widetilde{U}^*A)\|$ and $\|(I-\widetilde{U}\widetilde{U}^*)A\|$.  We begin with a lemma.

\begin{lemma} \label{lemma:rhsterms}
We have
\begin{align*}
\|\mathrm{skew}(\widetilde{U}^*A)\| &= \left\| \mathrm{skew}\left(\begin{pmatrix} I & 0 \end{pmatrix} e^{-tZ} \begin{pmatrix} q+t\Omega r \\ tKr \end{pmatrix}\right) \right\|, \\
\|(I-\widetilde{U}\widetilde{U}^*)A\| &= \left\| \begin{pmatrix} 0 & 0 \\ 0 & I \end{pmatrix} e^{-tZ} \begin{pmatrix} q+t\Omega r \\ tKr \end{pmatrix} \right\|.
\end{align*}
\end{lemma}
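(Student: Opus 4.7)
The plan is to carry out a direct block-matrix computation, relying on two unitarity facts: $\begin{pmatrix} Y & Y_\perp \end{pmatrix} \in \mathbb{C}^{m\times m}$ is unitary (so $\begin{pmatrix} Y^* \\ Y_\perp^* \end{pmatrix}\begin{pmatrix} Y & Y_\perp \end{pmatrix} = I$), and $e^{tZ}$ is unitary because $Z$ is skew-Hermitian (so $(e^{tZ})^* = e^{-tZ}$). Throughout, I will also use the fact that the Frobenius norm is invariant under multiplication by a unitary matrix on the left.

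For the first identity, I would start by transposing the formula for $\widetilde{U}$ to obtain $\widetilde{U}^* = \begin{pmatrix} I & 0 \end{pmatrix} e^{-tZ} \begin{pmatrix} Y^* \\ Y_\perp^* \end{pmatrix}$, then multiply by the block-factored expression for $A$. The inner block $\begin{pmatrix} Y^* \\ Y_\perp^* \end{pmatrix}\begin{pmatrix} Y & Y_\perp \end{pmatrix}$ collapses to the identity, leaving
\[
\widetilde{U}^* A = \begin{pmatrix} I & 0 \end{pmatrix} e^{-tZ} \begin{pmatrix} q+t\Omega r \\ tKr \end{pmatrix}.
\]
Applying $\mathrm{skew}(\cdot)$ and taking Frobenius norms gives the first claim immediately.

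For the second identity, the main step is to rewrite the orthogonal projector $I-\widetilde{U}\widetilde{U}^*$ in a form compatible with the block structure of $Z$. Using $I = \begin{pmatrix} Y & Y_\perp \end{pmatrix}\begin{pmatrix} Y^* \\ Y_\perp^* \end{pmatrix}$ and $I = e^{tZ}e^{-tZ}$, one checks that
\[
I - \widetilde{U}\widetilde{U}^* = \begin{pmatrix} Y & Y_\perp \end{pmatrix} e^{tZ} \begin{pmatrix} 0 & 0 \\ 0 & I \end{pmatrix} e^{-tZ} \begin{pmatrix} Y^* \\ Y_\perp^* \end{pmatrix}.
\]
Multiplying by $A$ again causes $\begin{pmatrix} Y^* \\ Y_\perp^* \end{pmatrix}\begin{pmatrix} Y & Y_\perp \end{pmatrix}$ to cancel, and then taking the Frobenius norm peels off the two unitary factors $\begin{pmatrix} Y & Y_\perp \end{pmatrix}$ and $e^{tZ}$ on the left, yielding the stated expression.

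There is no real obstacle here; the argument is a routine block-matrix manipulation. The only thing requiring care is keeping track of which factor is unitary so that the cancellations inside the expression and the unitary-invariance reductions on the outside are both clearly justified.
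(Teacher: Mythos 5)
Your proof is correct and matches the paper's own argument: both derive $\widetilde{U}^*A = \begin{pmatrix} I & 0 \end{pmatrix} e^{-tZ} \begin{pmatrix} q+t\Omega r \\ tKr \end{pmatrix}$ by the same cancellation, and both rewrite $I-\widetilde{U}\widetilde{U}^*$ as $\begin{pmatrix} Y & Y_\perp \end{pmatrix} e^{tZ} \begin{pmatrix} 0 & 0 \\ 0 & I \end{pmatrix} e^{-tZ} \begin{pmatrix} Y^* \\ Y_\perp^* \end{pmatrix}$ before invoking unitary invariance of the Frobenius norm. No differences worth noting.
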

\begin{proof}
The first equality follows from a direct calculation, using the fact that $Z$ is skew-Hermitian and $\begin{pmatrix} Y & Y_\perp \end{pmatrix}$ is unitary.  For the second, observe that
\begin{align*}
(I-\widetilde{U}\widetilde{U}^*)A 
&= \left[ I - \begin{pmatrix} Y & Y_\perp \end{pmatrix} e^{tZ} \begin{pmatrix} I \\ 0 \end{pmatrix} \begin{pmatrix} I & 0 \end{pmatrix} e^{-tZ} \begin{pmatrix} Y^* \\ Y_\perp^* \end{pmatrix} \right] \begin{pmatrix} Y & Y_\perp \end{pmatrix} \begin{pmatrix} q+t\Omega r \\ tKr \end{pmatrix} \\
&= \begin{pmatrix} Y & Y_\perp \end{pmatrix} e^{tZ} \begin{pmatrix} 0 & 0 \\ 0 & I \end{pmatrix} e^{-tZ} \begin{pmatrix} Y^* \\ Y_\perp^* \end{pmatrix} \begin{pmatrix} Y & Y_\perp \end{pmatrix} \begin{pmatrix} q+t\Omega r \\ tKr \end{pmatrix} \\
&= \begin{pmatrix} Y & Y_\perp \end{pmatrix} e^{tZ} \begin{pmatrix} 0 & 0 \\ 0 & I \end{pmatrix} e^{-tZ} \begin{pmatrix} q+t\Omega r \\ tKr \end{pmatrix}.
\end{align*}
The result follows from the fact that the Frobenius norm is unitarily invariant, and $\begin{pmatrix} Y & Y_\perp \end{pmatrix}$ and $e^{tZ}$ are unitary.
\end{proof}

The preceding lemma reveals that the order of accuracy of the approximation~(\ref{stiefelansatz}) can be determined by studying the quantity
\begin{equation} \label{etZA}
e^{-tZ} \begin{pmatrix} q+t\Omega r \\ tKr \end{pmatrix} = e^{-tZ} \begin{pmatrix} q\left(t^2(K^*K-\Omega^2),t\Omega\right) + t\Omega r\left(t^2(K^*K-\Omega^2),t\Omega\right) \\ tK r\left(t^2(K^*K-\Omega^2),t\Omega\right) \end{pmatrix}.
\end{equation}
Let us carry out this task in order to determine, as an illustration, the highest order approximation of the form~(\ref{stiefelansatz}) that can be achieved using polynomials $q(x,y)$ and $r(x,y)$ satisfying~(\ref{polycond1}-\ref{polycond3}) with $n=2$.  The cases $n=1$ and $n=3$ are handled similarly; we leave those details to the reader.  Collectively, these arguments will prove Theorem~\ref{thm:stiefelexp}.

It is a simple exercise to show that when $n=2$, the only polynomials $q(x,y)$ and $r(x,y)$ satisfying~(\ref{polycond1}-\ref{polycond2}) are of the form
\begin{align*}
q(x,y) &= 1-\frac{1}{3}x+cy^2, \\
r(x,y) &= 1-cy,
\end{align*}
where $c$ is a constant.  
Substituting into~(\ref{etZA}), writing $e^{-tZ} = \sum_{k=0}^\infty \frac{(-tZ)^k}{k!}$, and multiplying, one finds after a tedious calculation that
\[
e^{-tZ} \begin{pmatrix} q+t\Omega r \\ tKr \end{pmatrix} = \begin{pmatrix} I + \frac{1}{6} t^2 (K^*K - \Omega^2) - \left(c+\frac{1}{3}\right)t^3 K^*K\Omega \\ -\left(c+\frac{1}{2}\right)t^2 K\Omega \end{pmatrix} + O(t^4). 
\]
Hence, by Lemma~\ref{lemma:rhsterms} and the symmetry of $K^*K$ and $\Omega^2$, we have
\begin{align*}
\|\mathrm{skew}(\widetilde{U}^*A)\| &= \left|c+\frac{1}{3}\right| t^3 \|K^*K\Omega\| + O(t^4),\\
\|(I-\widetilde{U}\widetilde{U}^*)A\| &= \left|c+\frac{1}{2}\right| t^2 \|K\Omega\| + O(t^4).
\end{align*}
The optimal choice of $c$ is $c=-\frac{1}{2}$, giving
\begin{align*}
q(x,y) &= 1-\frac{1}{3}x-\frac{1}{2}y^2 = \gamma_2(x,y), \\
r(x,y) &= 1+\frac{1}{2}y = \delta_2(x,y),
\end{align*}
and
\begin{align*}
\|\mathrm{skew}(\widetilde{U}^*A)\| &= \frac{1}{6} t^3 \|K^*K\Omega\| + O(t^4),\\
\|(I-\widetilde{U}\widetilde{U}^*)A\| &= O(t^4).
\end{align*}
It follows that
\[
\mathrm{Exp}^{St}_Y (tH) = \mathcal{P}\left(Y\gamma_2(t^2 H^*H, tY^*H) + tH\delta_2(t^2 H^*H, tY^*H)\right) + O(t^3).
\]
Clearly, no other choice of $c$ will improve this approximant's order of accuracy, proving~(\ref{stiefelerror}) for $n=2$.

If it happens that $Y^*H=\Omega=0$, then~(\ref{grassmanexp}) and~(\ref{stiefelexp}) coincide, and~(\ref{polycond1}-\ref{polycond2}) and Theorem~\ref{thm:grassmanexp} imply
\begin{align*}
\mathcal{P}\left(Y\gamma_2(t^2 H^*H,0) + tH\delta_2(t^2 H^*H, 0)\right)
&= \mathcal{P}\left(Y\alpha_2(t^2 H^*H) + tH\beta_2(t^2 H^*H)\right) \\
&= \mathrm{Exp}^{Gr}_Y (tH) + O(t^5) \\
&= \mathrm{Exp}^{St}_Y (tH) + O(t^5).
\end{align*}
Likewise, if $m=p$, so that $H=Y\Omega$, $YY^*=I$, and $\mathrm{Exp}^{St}_Y (tH)=Ye^{t\Omega}$, then~(\ref{polycond3}),~(\ref{polarinvariance}), and Theorem~\ref{thm:polarexp} imply
\begin{align*}
\mathcal{P}\left(Y\gamma_2(t^2 H^*H,tY^*H) + tH\delta_2(t^2 H^*H,  \right.&\left. tY^*H)\right) \\
&= \mathcal{P}\left(Y\gamma_2(-t^2 \Omega^2,t\Omega) + tY\Omega\delta_2(-t^2 \Omega^2, t\Omega)\right) \\
&= \mathcal{P}\left(Y \Theta_2(t\Omega)\right) \\
&= Y\mathcal{P}\left(\Theta_2(t\Omega)\right) \\
&= Ye^{t\Omega} + O(t^5) \\
&= \mathrm{Exp}^{St}_Y (tH) + O(t^5).
\end{align*}
These observations prove~(\ref{stiefelapprox}-\ref{stiefelapprox2}) for the case $n=2$.  The proof of Theorem~\ref{thm:stiefelexp} is completed by performing analogous arguments for the cases $n=1$ and $n=3$.

\subsection{Geometric and Arithmetic Means of Unitary Matrices} \label{sec:geoarithproof}

We now prove Proposition~\ref{thm:polargeodesic} and Theorem~\ref{thm:geoarith}.

\paragraph{Proof of Proposition~\ref{thm:polargeodesic}}
Without loss of generality, consider the case in which $U_1 = I$, so that
\begin{align*}
(1-s)U_1 + s U_2
&= (1-s)I + s e^{t\Omega}.
\end{align*}
By Lemma~\ref{lemma:symmetrysimple}, it suffices to examine the norm of
\[
\mathrm{skew}\left(e^{-st\Omega} \left((1-s)I + s e^{t\Omega}\right) \right).
\]
The series expansion of $e^{-st\Omega} \left((1-s)I + s e^{t\Omega}\right)$ reads
\[
e^{-st\Omega} \left((1-s)I + s e^{t\Omega}\right) = \sum_{k=0}^\infty \frac{(1-s)s^k (-1)^k + s(1-s)^k}{k!} (t\Omega)^k.
\]
Since $\Omega$ is skew-Hermitian, $\mathrm{skew}(\Omega^k) = 0$ for every even $k$, showing that
\[
\mathrm{skew} \left(e^{-st\Omega} \left((1-s)I + s e^{t\Omega}\right)\right) = \sum_{\substack{k=1,\\k \text{ odd }}}^\infty \frac{(1-s)s^k (-1)^k + s(1-s)^k}{k!} (t\Omega)^k.
\]
When $s=1/2$, each term in the series vanishes, giving 
\[
\mathrm{skew} \left(e^{-st\Omega} \left((1-s)I + s e^{t\Omega}\right)\right)\big|_{s=1/2} = 0.
\]
When $s \neq 1/2$, the first non-vanishing term is of order $t^3$, showing that 
\[
\mathrm{skew} \left(e^{-st\Omega} \left((1-s)I + s e^{t\Omega}\right) \right) = O(t^3).
\]
The result follows by applying Lemma~\ref{lemma:symmetrysimple}.

\paragraph{Proof of Theorem~\ref{thm:geoarith}}
Let 
\[
A(t) = \sum_{i=1}^n w_i U_i(t)
\] 
and 
\[
\widetilde{U}(t) = \mathbb{G}(U_1(t),\dots,U_n(t);w).
\]
Observe that if $A(t)=U(t)H(t)$ is the polar decomposition of $A(t)$, then, by~(\ref{arithformula}),
\[
U(t) = \mathbb{A}(U_1(t),\dots,U_n(t);w).
\]
Moreover, using the fact that $\sum_{i=1}^n w_i = 1$, we have
\[
A(t) = U_1(t) + \sum_{i=2}^n w_i (U_i(t)-U_1(t)).
\]
This shows, by~(\ref{distanceassumption}), that 
\[
\lim_{t\rightarrow 0} A(t) = \lim_{t\rightarrow 0} U_1(t) = U_1(0).
\]
The latter matrix is unitary, so $\|A(t)\| = O(1)$. This is independent of the $U_i(t)$ we chose to pull out of the sum, since (\ref{distanceassumption}) implies that $U_1(0)=U_2(0)=\dots=U_n(0)$.

Now let $\Omega_i(t) = \frac{1}{t}\log(\widetilde{U}(t)^* U_i(t))$ for each $i$, so that
\begin{align*}
\widetilde{U}(t)^* U_i(t)
&= e^{t\Omega_i(t)}.
\end{align*}
Note that $\Omega_i(t)=O(1)$ by~(\ref{distanceassumption}).
In addition, by~(\ref{geomeanEL}),
\[
\sum_{i=1}^n w_i \Omega_i(t) = 0.
\] 
Suppressing the dependencies on $t$ for ease of reading, it follows that
\begin{align*}
\widetilde{U}^* A
&= \sum_{i=1}^m w_i \widetilde{U}^* U_i \\
&= \sum_{i=1}^m w_i e^{t\Omega_i} \\
&= \sum_{i=1}^m w_i I + \sum_{i=1}^m w_i t\Omega_i + \sum_{i=1}^m w_i \left( e^{t\Omega_i} - I - t\Omega_i \right) \\
&= I + \sum_{i=1}^m w_i \left( e^{t\Omega_i} - I - t\Omega_i \right).
\end{align*}
The skew-Hermitian part of $\widetilde{U}^*A$ is thus given by
\[
\mathrm{skew}(\widetilde{U}^* A) =  \sum_{i=1}^m w_i \left( \frac{e^{t\Omega_i} - e^{-t\Omega_i}}{2} - t\Omega_i \right).
\]
Since
\[
\frac{e^{t\Omega_i} - e^{-t\Omega_i}}{2} - t\Omega_i = O(t^3)
\] 
and $\|A\|=O(1)$, it follows from Lemma~\ref{lemma:symmetrysimple} that
\[
U-\widetilde{U} = O(t^3),
\]
i.e.,
\[
\mathbb{A}(U_1(t),\dots,U_n(t);w) = \mathbb{G}(U_1(t),\dots,U_n(t);w) + O(t^3).
\]

\section{Numerical Examples} \label{sec:numerical}

In this section, we discuss how the projected polynomials proposed in Theorems~\ref{thm:polarexp},~\ref{thm:grassmanexp}, and~\ref{thm:stiefelexp} can be efficiently computed, focusing on iterative methods for computing the polar decomposition. We then present numerical examples that illustrate their order of accuracy.

\subsection{Iterative Methods for Computing the Polar Decomposition}

The cost of computing a projected polynomial is largely dominated by the cost of evaluating the map $\mathcal{P}$.  
This map can be computed efficiently via a number of different iterative methods.  The most widely known, applicable when $m=p$, is the Newton iteration
\begin{equation} \label{Newton}
X_{k+1} = \frac{1}{2}(X_k + X_k^{-*}), \quad X_0 = A.
\end{equation}
The iterates $X_k$ so defined converge quadratically to the unitary factor $\mathcal{P}(A)=U$ in the polar decomposition $A=UH$ for any nonsingular square matrix $A$~\cite[Theorem 8.12]{higham2008functions}.  A closely related iteration, applicable when $m \ge p$, is given by
\begin{equation} \label{Newtonvariant}
X_{k+1} = 2X_k(I+X_k^*X_k)^{-1}, \quad X_0 = A.
\end{equation}
These iterates converge quadratically to $\mathcal{P}(A)$ for any full-rank $A \in \mathbb{C}^{m \times p}$ ($m \ge p$)~\cite[Corollary 8.14(b)]{higham2008functions}.  Finally, the Newton-Schulz iteration
\begin{equation} \label{NewtonSchulz}
X_{k+1} = \frac{1}{2} X_k (I - 3 X_k^* X_k), \quad X_0=A
\end{equation}
provides an inverse-free iteration whose iterates $X_k$ converge quadratically to $\mathcal{P}(A)$ for any $A \in \mathbb{C}^{m \times p}$ ($m \ge p$) whose singular values all lie in the interval $(0,\sqrt{3})$~\cite[Problem 8.20]{higham2008functions}.  For further information, including other iterations for computing $\mathcal{P}(A)$, see~\cite[Chapter 8]{higham2008functions}.

\subsection{Numerical Convergence}

We tested the accuracy of the projected polynomials detailed in Theorems~\ref{thm:polarexp},~\ref{thm:grassmanexp}, and~\ref{thm:stiefelexp} by applying them to randomly generated inputs.  To calculate $\mathcal{P}$, we used~(\ref{Newton}) for square matrices and~(\ref{NewtonSchulz}) for rectangular matrices.   The results of the tests, detailed in Tables~\ref{tab:unitary}-\ref{tab:stiefel}, corroborate the convergence rates predicted by the theory.

\begin{table}[t]
\centering
\pgfplotstabletypeset[
every head row/.style={after row=\midrule,before row={\midrule & \multicolumn{3}{c|}{$n=1$} & \multicolumn{3}{c|}{$n=2$} & \multicolumn{3}{c}{$n=3$} \\ \midrule}},
create on use/rate1/.style={create col/dyadic refinement rate={1}},
create on use/rate2/.style={create col/dyadic refinement rate={2}},
create on use/rate3/.style={create col/dyadic refinement rate={3}},
columns={0,1,rate1,2,rate2,3,rate3},
columns/0/.style={sci zerofill,column type/.add={}{|},column name={$t_0/t$}},
columns/1/.style={dec sep align={c|},sci,sci zerofill,precision=3,column type/.add={}{|},column name={Error}},
columns/2/.style={dec sep align={c|},sci,sci zerofill,precision=3,column type/.add={}{|},column name={Error}}, 
columns/3/.style={dec sep align={c|},sci,sci zerofill,precision=3,column type/.add={}{|},column name={Error}},
columns/rate1/.style={fixed zerofill,precision=3,column type/.add={}{|},column name={Order}},
columns/rate2/.style={fixed zerofill,precision=3,column type/.add={}{|},column name={Order}},
columns/rate3/.style={fixed zerofill,precision=3,column name={Order}}
]
{Data/unitary.dat}
\caption{Errors in approximating the exponential of a skew-Hermitian matrix $\Omega$ with the projected polynomials of Theorem~\ref{thm:polarexp}.  Shown above are the errors $\|\mathcal{P}(\Theta_n(t\Omega))-e^{t\Omega}\|$ versus $t$ for $n=1,2,3$, where $t_0=0.01$ and $\Omega$ is a random $1000 \times 1000$ skew-Hermitian matrix.}
\label{tab:unitary}
\vspace{-0.2in}
\end{table}

\begin{table}[t]
\centering
\pgfplotstabletypeset[
every head row/.style={after row=\midrule,before row={\midrule & \multicolumn{3}{c|}{$n=1$} & \multicolumn{3}{c|}{$n=2$} & \multicolumn{3}{c}{$n=3$} \\ \midrule}},
create on use/rate1/.style={create col/dyadic refinement rate={1}},
create on use/rate2/.style={create col/dyadic refinement rate={2}},
create on use/rate3/.style={create col/dyadic refinement rate={3}},
columns={0,1,rate1,2,rate2,3,rate3},
columns/0/.style={sci zerofill,column type/.add={}{|},column name={$t_0/t$}},
columns/1/.style={dec sep align={c|},sci,sci zerofill,precision=3,column type/.add={}{|},column name={Error}},
columns/2/.style={dec sep align={c|},sci,sci zerofill,precision=3,column type/.add={}{|},column name={Error}}, 
columns/3/.style={dec sep align={c|},sci,sci zerofill,precision=3,column type/.add={}{|},column name={Error}},
columns/rate1/.style={fixed zerofill,precision=3,column type/.add={}{|},column name={Order}},
columns/rate2/.style={fixed zerofill,precision=3,column type/.add={}{|},column name={Order}},
columns/rate3/.style={fixed zerofill,precision=3,column name={Order}}
]
{Data/grassman.dat}
\caption{Errors in approximating the Riemannian exponential map on the Grassmannian manifold with the projected polynomials of Theorem~\ref{thm:grassmanexp}.  Shown above are the errors $\|\mathcal{P}(Y\alpha_n(t^2H^*H)+tH\beta_n(t^2H^*H))-\mathrm{Exp}^{Gr}_Y(tH)\|$ versus $t$ for $n=1,2,3$, where $t_0=0.01$, $Y$ is a random $2000 \times 400$ matrix with orthonormal columns, and $H$ is a random $2000 \times 400$ matrix satisfying $Y^*H=0$.}
\label{tab:grassman}
\vspace{-0.35in}
\end{table}

\begin{table}[t]
\centering
\pgfplotstabletypeset[
every head row/.style={after row=\midrule,before row={\midrule & \multicolumn{3}{c|}{$n=1$} & \multicolumn{3}{c|}{$n=2$} & \multicolumn{3}{c}{$n=3$} \\ \midrule}},
create on use/rate1/.style={create col/dyadic refinement rate={1}},
create on use/rate2/.style={create col/dyadic refinement rate={2}},
create on use/rate3/.style={create col/dyadic refinement rate={3}},
columns={0,1,rate1,2,rate2,3,rate3},
columns/0/.style={sci zerofill,column type/.add={}{|},column name={$t_0/t$}},
columns/1/.style={dec sep align={c|},sci,sci zerofill,precision=3,column type/.add={}{|},column name={Error}},
columns/2/.style={dec sep align={c|},sci,sci zerofill,precision=3,column type/.add={}{|},column name={Error}}, 
columns/3/.style={dec sep align={c|},sci,sci zerofill,precision=3,column type/.add={}{|},column name={Error}},
columns/rate1/.style={fixed zerofill,precision=3,column type/.add={}{|},column name={Order}},
columns/rate2/.style={fixed zerofill,precision=3,column type/.add={}{|},column name={Order}},
columns/rate3/.style={fixed zerofill,precision=3,column name={Order}}
]
{Data/stiefel.dat}
\caption{Errors in approximating the Riemannian exponential map on the Stiefel manifold with the projected polynomials of Theorem~\ref{thm:stiefelexp}.  Shown above are the errors $\|\mathcal{P}(Y\gamma_n(t^2H^*H,tY^*H)+tH\delta_n(t^2H^*H,tY^*H))-\mathrm{Exp}^{St}_Y(tH)\|$ versus $t$ for $n=1,2,3$, where $t_0=0.01$, $Y$ is a random $2000 \times 400$ matrix with orthonormal columns, and $H$ is a random $2000 \times 400$ matrix satisfying $Y^*H=-H^*Y$.}
\label{tab:stiefel}
\vspace{-0.4in}
\end{table}

\section{Conclusion}

This paper has presented a family of high-order retractions on the unitary group, the Grassmannian manifold, and the Stiefel manifold.  All of these retractions were constructed by projecting certain matrix polynomials onto the set of matrices with orthonormal columns using the polar decomposition, or, in the case of the Grassmannian, using either the polar decomposition or the QR decomposition.  There are several interesting applications and extensions of this strategy that seem worthwhile to pursue.  On quadratic Lie groups other than the unitary group, one might consider adopting the same strategy, replacing the polar decomposition with the generalized polar decomposition~\cite{munthe2001generalized,higham2010canonical}.  It might also be worthwhile to consider projecting rational functions, rather than polynomials, to achieve higher accuracy for comparable cost.  It may also be possible to leverage these retractions, together with methods for computing their derivatives~\cite{gawlik2016iterative}, to construct high-order approximations of parallel transport operators on matrix manifolds; see~\cite[Section 8.1.2]{absil2009optimization}.

It is worth noting that many of the constructions in this paper might generalize nicely to infinite dimensions.  For instance, replacing the matrices $Y$ and $H$ in Theorem~\ref{thm:grassmanexp} with quasi-matrices in the sense of~\cite{townsend2015continuous}, one obtains a method for approximating geodesics between finite-dimensional function spaces, with $H^*H$ playing the role of a Gramian, and with $\mathcal{P}$ interpreted as the map sending an ordered basis of functions to the nearest ordered, orthonormal basis of functions.

\section{Acknowledgements}

We wish to thank the developers of the non-commutative algebra package \emph{NCAlgebra}~\cite{NCAlgebra}, which we used to carry out some of the calculations that appeared/were mentioned in Section~\ref{sec:stiefelexpproof}. The first author is supported by NSF grants CMMI-1334759, DMS-1345013, and DMS-1703719. The second author is supported by NSF grants CMMI-1334759, DMS-1345013, DMS-1411792.

\bibliography{references}

\end{document}